\documentclass{article}
\usepackage[utf8]{inputenc}
\usepackage{amsmath}
\usepackage{amsmath,amssymb}
\usepackage{euscript}

\usepackage[utf8]{inputenc}
\usepackage[english]{babel}

%BIBLIOGRAPHY
%\usepackage{biblatex}

\usepackage{blindtext}
\usepackage{amssymb}

\usepackage{amsthm}

\DeclareMathOperator{\sign}{sgn}

\newtheorem{theorem}{Theorem}
\newtheorem{corollary}{Corollary}

\newtheorem{proposition}{Proposition}

\theoremstyle{remark}

\theoremstyle{definition}
\newtheorem{definition}{Definition}
\newtheorem{example}{Example}
\newtheorem{case}{Case}
\usepackage{amsthm}

\numberwithin{equation}{section}
\numberwithin{corollary}{section}
\numberwithin{definition}{section}
\numberwithin{theorem}{section}

\usepackage{graphicx}% http://ctan.org/pkg/graphicx
\usepackage{yhmath}% http://ctan.org/pkg/yhmath
\usepackage{mathdots}% http://ctan.org/pkg/mathdots

\usepackage{flexisym}

\title{\textsc{Ultra-recursive sequences}}
\author{\textsc{Óscar Andrés Ram. Ramírez}}
\date{February 2019}

\newcommand{\Addresses}{{% additional braces for segregating \footnotesize
  \footnotesize
\textsc{Centro de Nanociencias y Nanotecnolog\'ia, Universidad Nacional Aut\'onoma de M\'exico, Apdo. Postal 14, 2280 Ensenada B. C., M\'exico}\par\nopagebreak
  \textit{E-mail address},  \texttt{nl\_rami15@cnyn.unam.mx}
}}

\begin{document}

\maketitle
\begin{abstract}
We study a new type of sequences whose elements are defined in terms of the position, sign and magnitude of another element of the sequence. The name \textit{ultra-recursive} comes from the fact that these sequences possess terms that are generated adding either the previous (as in traditional recurrences formulas) or the next terms. They are also \textit{self-referential} because the rules to generate every member alludes to some value of the sequence.
\end{abstract}

\maketitle

%    Text of article.
\section{Introduction}

Our ultimate purpose is to explore transformations that map a sequence $(\Lambda_k)$ to $(\Lambda\textprime_k)$ so that every term $\Lambda\textprime_{p+1}$ is the sum of previous or later terms of $\Lambda_p$. Therefore, it is convenient to use sequences whose domain are the integers, namely bi-infinite sequences or doubly infinite sequences, who do not have an initial nor a last element:
\begin{equation*}
    (\Lambda_k)^{\infty}_{k=-\infty}=(\ldots,\Lambda_{-3},\Lambda_{-2},\Lambda_{-1},\Lambda_0,\Lambda_1,\Lambda_2,\Lambda_{3},\ldots)
\end{equation*}

Before we define any ultra-recursive sequence, it is necessary to remind some properties of the sequences defined by traditional recurrence relations. Let's consider the following transformation $\textsc{G}$:
\begin{equation}
    \textsc{G}\circ(\Lambda_k)\equiv (\Lambda\textprime_k) \colon \ \Lambda\textprime_q=P*\Lambda_{q-1}-Q*\Lambda_{q-2} 
\end{equation}
where $P$ and $Q$ are integers.

For a given sequence of sequences $((\Lambda_{j,k})^{\infty}_{k=-\infty})^{\infty}_{j=-\infty}\equiv\boldsymbol{\Lambda}$, we can define the transformation $\textsc{\textbf{G}}_l$ that maps any sequence $(\Lambda_{j,k})$ to the element $(\Upsilon_{j+l,k})$ of $\boldsymbol{\Upsilon}$ as follows
\begin{equation}
    \textsc{\textbf{G}}_l\circ\boldsymbol{\Lambda}\equiv\boldsymbol{\Upsilon}\colon\ (\Upsilon_{j+l,k})=\textsc{G}\circ(\Lambda_{j,k})
\end{equation}

If $\boldsymbol{\Upsilon}=\boldsymbol{\Lambda}$, it means that $\textsc{\textbf{G}}_l$ maps $\boldsymbol{\Lambda}$ to itself. Moreover, if $r$ is the least number that satisfy $\textsc{\textbf{G}}^r_l\circ\boldsymbol{\Lambda}=\boldsymbol{\Lambda}$, we know the sequence remains invariant after applying (any multiple of) $r$ times the transformation; in that case, we say that $\boldsymbol{\Lambda}$ is an eigen-sequence of the transformation $\textsc{\textbf{G}}^r_l$. There are several ways to find such sequences of sequences, some of them are rather complicated. For $l=0$, $\textsc{\textbf{G}}^r_0\circ\boldsymbol{\Lambda}=\boldsymbol{\Lambda}$ implies $\textsc{G}^r\circ(\Lambda_{j,k})=(\Lambda_{j,k})$, and this can take place if and only if each element satisfy the relation\footnote{In equation (1.3), $\Lambda_{j,q}$ and $\Lambda_{j,q-r-i}$, which are elements of any $j$-th sequence in $\boldsymbol{\Lambda}$ where substituted just by $\Lambda_{q}$ and $\Lambda_{q-r-i}$ for a clearer exposition of the main idea.}
\begin{equation}
    \Lambda_{q}=\sum^{r}_{i=0}(-1)^i\binom{r}{i}P^{r-i}Q^i*\Lambda_{q-r-i}
\end{equation}
%Talk more about the liberties that this kind of sequences have. For example, it can be proved that there at at least r "free elements" or initial values. 
For the sequences who do ``have a beginning'', this equation admits $2r$ initial values, the next elements are a combination of the previous ones. But even those sequences can be extended ``to the left'', meaning that it is possible to calculate predecessors of the initial values and make it a bi-infinite sequence so that (1.3) is still valid throughout the sequence.

Equation (1.3) is a linear homogeneous recurrence relation of degree $r+1$ with constant coefficients. It allow us to find eigen-sequences of the transformation $\textsc{G}^r$ by doing some calculations;\footnote{It can also be proved that given $({\Lambda}_k)$ that remains invariant under $\textsc{G}^r$, and $(\dot{\Lambda}_k)$ that remains invariant under $\textsc{G}^s$, then its sum $(\ddot{\Lambda}_k)\equiv (\Lambda_k)+(\dot{\Lambda}_k)$ has the property $\textsc{G}^t\circ(\ddot{\Lambda}_k)=(\ddot{\Lambda}_k)$, where $t$ is the least common multiple of $r$ and $s$ and in general $\textsc{G}^n\circ(\ddot{\Lambda}_k)\neq(\ddot{\Lambda}_k)$ for $n<t$.} in this paper, the purpose of its presence is merely to expose how easy and mechanical it is to find eigen-sequences of the transformation (1.1) and to show that when $r=1$, (1.3) is exactly the recurrence relation of the Lucas sequence. 

It's is well known that the Lucas sequence has as complementary instances the Fibonacci sequence $(F_n)^\infty_{n=0}=(0,1,1,2,3,5,8,13,\ldots)$ and the Lucas numbers $(L_n)=(2,1,3,4,7,11,18,\ldots)$, \cite{koshy} both satisfying the recurrence relation
\begin{equation}
    \Lambda_p=\Lambda_{p-1}+\Lambda_{p-2}
\end{equation}
That is (1.3) when $r=P=-Q=1$. Any closed form solution for this kind of sequences is expressed in terms of the numbers $\varphi=\frac{1}{2}(1+\sqrt{5})$ and $\psi=1-\varphi=-\varphi^{-1}$, who are the roots of the second order equation $x^2=x+1$, both having the property
\begin{equation}
    \phi^{n+2}=\phi^{n+1}+ \phi^{n}
\end{equation} 

The closed-form expression for the Fibonacci sequence is
\begin{equation}
    F_n=\frac{1}{\sqrt{5}}(\varphi^n-\psi^n)
\end{equation}

For Lucas numbers, it is
\begin{equation}
    L_n=\varphi^n+\psi^n
\end{equation} %mention the expansion to the integers?

Any sequence defined by (1.4) has an infinite number (all equivalent) of closed-form expressions
\begin{equation}
    \Lambda_k=\beta_e\varphi^{k-e}+\gamma_e\psi^{k-e}
\end{equation}
where $\beta_e$ and $\gamma_e$ are the constants that, when substituted, generate the two consecutive values $\Lambda_e$ and $\Lambda_{e+1}$. Thus
\begin{equation}
    \begin{pmatrix}
    1 & 1\\\phi & \psi
    \end{pmatrix}
    \begin{pmatrix}
    \beta_e\\\gamma_e
    \end{pmatrix}=
    \begin{pmatrix}
    \Lambda_e\\\Lambda_{e+1}
    \end{pmatrix}\\
    \iff
    \begin{pmatrix}
    \beta_e\\\gamma_e
    \end{pmatrix}=\frac{1}{\sqrt{5}}\begin{pmatrix}
    A_{e}\varphi^{-1}+A_{e+1}\\A_e\varphi-A_{e+1}
    \end{pmatrix}
\end{equation}
These results will be useful in the following sections.

%talk about self-referential sequences historically, that is Conway's sequences and Hofstadter's
%talk about self-descriptive sequence

\subsection{Sequences satisfying strange recurrence relations}
%Conway's and Hofstadter's and variations.
In his famous book \textit{G\"odel, Escher, Bach: an Eternal Golden Braid} \cite{hofstadter}, Douglas R. Hofstadter introduced the sequence $(Q_n)^\infty_{n=1}$ with the initial values $Q_1=Q_2=1$ and the following relation:
\begin{equation*}
    Q_n=Q_{n-Q_{n-1}}+Q_{n-Q_{n-2}}\quad \text{for} \ n>2
\end{equation*}
which he called a ``strange'' recursion because each value depends on two previous elements, but not the immediately previous two values, like in the Fibonacci sequence. The first elements of $(Q_n)$ are
\begin{equation*}
    (Q_n)=(1,1,2,3,3,4,5,5,6,6,6,8,8,8,10,9,10,\ldots)
\end{equation*}

Hofstadter's $Q$ sequence has been studied both analytically and numerically, it is studied in \cite{pinn1} but little is known about it and it's chaotic behaviour; it's thought that there are an infinite number numbers that does not appear on $(Q_n)$! 

Inspired by $(Q_n)$, other recursions were introduced. For example, the Conway sequence:
\begin{equation*}
    C_n=C_{C_{n-1}}+C_{n-C_{n-1}} \quad \text{for} \ n>2
\end{equation*}
with the initial values $C_1=C_2=1$. The first elements of $C_n$ are
\begin{equation*}
    (C_n)=(1,1,2,2,3,4,4,4,5,6,7,7,8,8,8,8,9,\ldots)
\end{equation*}

\subsection{The general ultra-recursive transformation}

In the next two sections, we'll study some particular cases of this type of transformation of sequences of numbers
\begin{equation}
    \textsc{H}(F_1,F_2,\ldots,F_6)\circ(u_k)\equiv(\nu_k)\colon \ \nu_{p+1}=\sum^{F_2-1}_{i=F_1}[F_3* u_{p F_4-i F_5(\sign u_p)}+F_6]
\end{equation}
where $(u_k)$ is a bi-infinite sequence, $p$ is the position of any of it's elements and $F_1,...,F_6$ are any kind of functions.\footnote{In (1.10), the upper bound of the summation was chosen by simplicity, so that $F_2-F_1$ is the number of summands.} The relevant property of this transformation is that $\nu_{p+1}$ can be a sum of the successors of $u_p$.

\section{Ultra-recursive sequences}

We are now going to study the eigen-sequences of one of the simplest non-trivial cases of (1.10): the transformation $\textsc{H}(0,|u_p|,1,1,\sign{u_p},1)$. From now on, we will refer to it just as $\textsc{O}$:
\begin{equation}
    \textsc{O}\circ(u_k)\equiv(\nu_k)\colon \ \nu_{p+1}=\sum^{|u_p|-1}_{i=0}[u_{p-i\sign u_p}+1]
\end{equation}

A sequence that remains invariant under this transformation must satisfy a strange recurrence formula, we will refer to it as ultra-recursive sequence.

\begin{definition}
We call ultra-recursive sequence $(u_k)_{k\in\mathbb{Z}}$ to any sequence whose elements satisfy the  formula
\begin{equation}
u_{p+1}=\sum^{|u_p|-1}_{i=0}[u_{p-i\sign u_p}+1]
\end{equation}
By definition, $(u_k)$ is an eigen-sequence of $\textsc{O}$. This is: $\textsc{O}\circ(u_k)=(u_k)$.
\end{definition}

There are two interesting interpretations of Definition 2.1, both arise from the equivalent equation:
\begin{equation}
    u_{p+1}=|u_p|+\sum^{|u_p|-1}_{i=0}u_{p-i\sign u_p}
\end{equation}

\textbf{Interpretation 1.} In $(u_k)$, every term generates its successor according to the following rules:  $u_{p+1}$ is equal to $|u_p|$ plus another $|u_p|$ elements: $u_p$ and the previous $[u_p-1]$ elements (if $u_p$ is \textbf{positive}) or $u_p$ and the next $[|u_p|-1]$ elements (if $u_p$ is \textbf{negative}). If $u_p=0$, no other value would be added.

\textbf{Interpretation 2.} $(u_k)$ is a self-descriptive sequence since any two consecutive members give information about the sum of a subset of the sequence: $u_{p+1}-|u_p|=\sum u_{p \pm i}$. Specifically: if $u_p$ is positive, $(u_{p+1}-u_p)$ will always be the sum of the $u_p$ predecessors of $u_{p+1}$; if $u_p$ is negative, $(u_{p+1}+u_p)$ will be the sum of the $|u_p|$ successors of $u_{p-1}$.

What kind of collection of numbers satisfy an equation of this nature? Note that we didn't define any initial terms and we won't do it in the near future because, roughly speaking, it is not possible to give any value we desire to a set of terms. If we say $u_0=3$ and $u_1=5$, for example, we're saying the first generates the second by the sum $|3|+(3+u_{-1}+u_{-2})$, but the last two elements are not defined yet, all we know is $u_{-1}+u_{-2}=-1$ and $u_{-1}$ must generate $u_0=3$ by (2.2) and it is not evident that there exist two values that can satisfy all those requirements.\\
 
Till this moment, it is unclear what combination of elements can be an ultra-recursive sequence. Let's see what happens if the value $1$ exists in the position $p$.

\begin{corollary}
According to (2.3), if there exists an ultra-recursive sequence $(u_k)$ with $u_p=1$, it's successors would be
    \begin{equation*}
    \begin{split}
        &u_{p+1}=1+\sum^{0}_{i=0}u_{p-i}=1+u_p=1+1=2\\
        &u_{p+2}=2+\sum^1_{i=0}u_{p+1-i}=2+u_{p+1}+u_p=2+2+1=5
        \end{split}
    \end{equation*}
    Meaning $u_p=1 \implies (u_k)=(\ldots,u_{p-2},u_{p-1}, \ \boldsymbol{1} \ , \ \boldsymbol{2} \ , \ \boldsymbol{5} \ ,u_{p+3},u_{p+4}\ldots)$.
\end{corollary}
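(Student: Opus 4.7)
The plan is to apply the defining recurrence (2.3) twice, using the value $u_p=1$ as the entry point and then propagating the computed successor $u_{p+1}$ forward one more step. No structural argument is needed: the recurrence (2.3) expresses each $u_{p+1}$ directly in terms of finitely many earlier or later terms of the sequence, and once $u_p$ is known both the number of summands $|u_p|$ and the direction of summation $\sign u_p$ are completely determined.

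First, I would evaluate (2.3) at position $p$ under the hypothesis $u_p=1$. Since $|u_p|=1$ and $\sign u_p=+1$, the sum $\sum_{i=0}^{|u_p|-1}u_{p-i\sign u_p}$ collapses to the single term $u_p$, so $u_{p+1}=|u_p|+u_p=1+1=2$. This computation only uses the value $u_p$ itself, so it does not rely on any unspecified predecessor.

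Second, now armed with $u_{p+1}=2$, I would apply (2.3) at position $p+1$. Here $|u_{p+1}|=2$ and $\sign u_{p+1}=+1$, so the sum has two summands indexed by $i=0,1$ and picks up the two values $u_{p+1}$ and $u_p$ that have just been determined. This yields $u_{p+2}=2+u_{p+1}+u_p=2+2+1=5$, again without invoking any term with index smaller than $p$.

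There is essentially no obstacle: the only subtlety worth flagging is that, because both $u_p$ and $u_{p+1}$ turn out to be positive, the summation indices $p-i\sign u_p$ stay within the already-computed range, and so the argument is self-contained and avoids the pitfall described in the paragraph preceding the corollary, where an attempt to prescribe arbitrary initial values led to underdetermined backward constraints. Thus the conclusion $(u_k)=(\ldots,u_{p-2},u_{p-1},\mathbf{1},\mathbf{2},\mathbf{5},u_{p+3},u_{p+4},\ldots)$ follows from two direct substitutions into (2.3).
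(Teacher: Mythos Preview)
Your proof is correct and follows exactly the same approach as the paper: the corollary in the paper is stated with the computation embedded in it, and that computation is precisely the two direct substitutions into (2.3) that you carry out. Your additional remark that the summation indices never drop below $p$ is a helpful clarification, but otherwise there is nothing to distinguish your argument from the paper's.
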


We didn't give a universal value to $u_{p+3}$ because it is dependent on its 5 predecessors and $u_{p-1}$ and $u_{p-2}$ are undefined. In general, it is hard to propose \textit{manually} possible values for $(u_k)$, but we can find more about the restrictions imposed by Definition 2.1. 

\subsection{Cases of Definition 2.1}
Given a presumably ultra-recursive sequence $(u_k)$ with an arbitrary element $u_p$
    \begin{equation*}
        (u_k)=(\ldots, u_{p-3}, u_{p-2}, u_{p-1}, \boldsymbol{u_p}, u_{p+1}, u_{p+2}, u_{p+3} \ldots)
    \end{equation*}
there are three possible cases: $u_p>0, \ u_p<0$ \ and \ $u_p=0$.
\begin{case}
    When $u_p$ is positive, 
    \begin{equation*}
        u_p>0 \iff |u_p|=u_p \iff \sign u_p=1   
    \end{equation*}
    and (2.3) will be equivalent to:
    \begin{equation}
        u_{p+1}=u_p+\sum^{u_p-1}_{i=0}u_{p-i}=2u_p+\sum^{u_p-1}_{i=1}u_{p-i}
    \end{equation}
    and also equivalent to the following equations
    \begin{equation}
        u_{p+1}=2u_p+\sum^{p-1}_{i=p+1-u_p}u_i
    \end{equation}
    \begin{equation}
        u_{p+1}=2+\sum^{p-1}_{i=p+1-u_p}(u_i+2)
    \end{equation}
\end{case}
\begin{case}
    If $u_p$ is negative,
    \begin{equation*}
        u_p<0 \iff |u_p|=-u_p \iff \sign{u_p}=-1
    \end{equation*}
    and by (2.3): 
    \begin{equation*}
        u_{p+1}=-u_p+\sum^{-u_p-1}_{i=0}u_{p+i}=-u_p+u_p+\sum^{-u_p-1}_{i=1}u_{p+i}=\sum^{p-1-u_p}_{i=p+1}u_i
    \end{equation*}
    Extracting the first element from the sum (when $i=p+1$), gives
    \begin{equation}
        u_{p+1}=u_{p+1}+\sum^{p-1-u_p}_{i=p+2}u_{i}
    \end{equation}
    And this can only be true if the sum is equal to zero. We have found the first important conclusion about $(u_k)$.
    
    \begin{corollary}
        In a u-sequence $(u_k)$, if the element $u_p$ is negative, then
        \begin{equation}
            \sum^{p-1-u_p}_{i=p+2}u_i=0
        \end{equation}
    \end{corollary}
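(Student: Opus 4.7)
The plan is to derive the identity directly from the defining recurrence (2.3) by specializing to the negative case and isolating a cancellation. Since we are told $u_p < 0$, the formulas $|u_p| = -u_p$ and $\sign u_p = -1$ apply, so the terms indexed by $i$ in (2.3) become $u_{p+i}$ rather than $u_{p-i}$: the transformation reaches \emph{forward} in the sequence when the pivot is negative. This is the key structural feature I would exploit.

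First I would write out (2.3) for $u_p<0$, obtaining
\begin{equation*}
u_{p+1} = -u_p + \sum_{i=0}^{-u_p-1} u_{p+i}.
\end{equation*}
Next I would split off the $i=0$ summand, which is exactly $u_p$, so that the $-u_p$ sitting outside the sum cancels with it, leaving
\begin{equation*}
u_{p+1} = \sum_{i=1}^{-u_p-1} u_{p+i}.
\end{equation*}
Reindexing by $j=p+i$ turns this into $u_{p+1} = \sum_{j=p+1}^{p-1-u_p} u_j$. Then I would extract the very first term of the reindexed sum, namely $u_{p+1}$ itself, obtaining $u_{p+1} = u_{p+1} + \sum_{j=p+2}^{p-1-u_p} u_j$. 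Subtracting $u_{p+1}$ from both sides yields the claimed identity.

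The only thing to be careful about is the range of the sum for small $|u_p|$: when $u_p = -1$ the upper limit $p-1-u_p = p$ falls below the lower limit $p+2$, so the sum is empty and the conclusion is vacuously $0=0$; when $u_p = -2$ the single term $u_{p+2}$ must be zero; and for $u_p \le -3$ the identity is a nontrivial cancellation among $-u_p-2$ consecutive successors of $u_{p+1}$. I would verify these index endpoints explicitly to be sure the extraction of the $i=0$ and $i=1$ terms is legitimate.

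There is no real obstacle here beyond careful index bookkeeping: everything follows by a single substitution, a cancellation of $-u_p$ with the $i=0$ summand, and then absorbing $u_{p+1}$ (the $i=1$ summand after reindexing) back into the left-hand side. The argument will not require any auxiliary lemma, any hypothesis on other terms of $(u_k)$, nor any use of the earlier material on Lucas-type recurrences; it is purely an algebraic re-expression of the defining equation under the assumption $\sign u_p = -1$.
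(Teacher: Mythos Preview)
Your argument is correct and follows exactly the same route as the paper's own derivation in Case~2: substitute $|u_p|=-u_p$, $\sign u_p=-1$ into (2.3), cancel the $i=0$ summand against $-u_p$, reindex, and peel off the $i=p+1$ term to force the remaining sum to vanish. One small slip in your edge-case discussion: for $u_p=-2$ the upper limit $p-1-u_p=p+1$ is still below the lower limit $p+2$, so that sum is also empty (consistent with the paper's Corollary~2.5, where $u_p=-2$ imposes no constraint on $u_{p+1}$ or $u_{p+2}$).
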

\end{case}
As we shall see later, this Corollary also means that given any sequence $(\upsilon_k)$, if there are consecutive elements  whose sum is zero, that is if $\sum^\beta_{i=\alpha}\upsilon_{i}=0$ and if $\upsilon_{\alpha-2}=\alpha-\beta-3$, then the element $\upsilon_{\alpha-1}$ will remain invariant under the transformation $\textsc{O}$ ($\upsilon^\prime_{\alpha-1}=\upsilon_{\alpha-1}$).\\

\begin{corollary}
    From (2.2) and by the definition of summation, it's clear that 
    \begin{equation*}
        u_p=0\implies u_{p+1}=0
    \end{equation*}
    consecutively
    \begin{equation}
        u_p=0\implies u_q=0 \ \ \forall \ \ q>p
    \end{equation}
\end{corollary}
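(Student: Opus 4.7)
The plan is to establish the implication in two steps: first prove the base case $u_p = 0 \implies u_{p+1} = 0$ by direct substitution into equation (2.2), and then promote it to all $q > p$ by a short induction on $q$.

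For the base case, I would evaluate the right-hand side of (2.2) with $u_p = 0$. The upper limit of the sum becomes $|u_p|-1 = -1$, which is strictly less than the lower limit $0$; under the standard convention that a summation whose upper index lies below its lower index is empty and therefore equal to $0$, the formula collapses to $u_{p+1} = 0$. This matches the informal phrasing given just after Definition 2.1 (``If $u_p=0$, no other value would be added'') and it sidesteps the question of how $\sign 0$ should be interpreted, because no summand is ever actually evaluated.

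For the inductive step, I would assume $u_q = 0$ for some $q \geq p$ and apply the same argument with $q$ in place of $p$ to conclude $u_{q+1} = 0$. Induction on $q$ then yields $u_q = 0$ for every $q > p$, which is the statement of the corollary.

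There is essentially no obstacle here: the result is a direct unfolding of (2.2) together with the empty-sum convention. The only delicate-looking point is what the symbol $\sign 0$ means when $u_p = 0$, but since the range of the summation is empty in that case, the value of $\sign 0$ never enters the computation. So no additional justification beyond the convention on empty sums is needed, and the induction is immediate.
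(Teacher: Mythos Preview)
Your proposal is correct and matches the paper's own reasoning: the paper simply invokes equation~(2.2) together with the empty-sum convention to get $u_p=0\implies u_{p+1}=0$, and then notes that this propagates to all $q>p$. Your write-up is more explicit (spelling out the induction and the irrelevance of $\sign 0$), but the underlying argument is identical.
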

But this is evident. According to Interpretation 1, if every term produces the next by adding as many numbers as it's value, zero must produce another zero or a bored infinite sequence of zeros to its right. But why didn't we use the logical operator for \textbf{bi-implication} in Corollary 2.3? What other elements can generate a zero, apart from zero?
It's time to ask ourselves seriously: at what point it is a lose of time to study an equation as arbitrary as (2.2)? The following example will partially solve this concerns and will lead us to the discovery of an interesting number that will allow us to create, manipulate or propose ultra-recursive sequences with unexpected properties. 

\begin{example}
    Corollary 2.2 predicts the existence of a member of $(u_k)$ equal to zero. The summation has only one summand, when its lower and upper bound are equal:
    \begin{equation*}
        \sum^{p-1-u_p}_{i=p+2}u_i=\sum^{\eta}_{i=\eta}u_i=u_\eta \iff p+2=p-1-u_p \iff u_p=-3
    \end{equation*}
    There is only one summand when $u_p=-3$ and it has to be $0$, according to Corollary 2.2: $u_p=-3 \implies u_{p+2}=0$. And according to Corollary 2.3: $u_p=-3 \implies u_q=0 \ \ \forall \ \ q>p+1$
    
    Although $-3$ implies an infinite sequence of zeros two places at its right, it does not produce them; what produces is any number that has the audacity to produce a zero. Since $ \sum^{p+3+m}_{i=p+3}u_i=0$ for an arbitrary $m>0$, the Corollary 2.2 establishes that in this case, $u_{p+1}$ can have any negative value $-m$: this means that $u_p=-3$ can generate any negative number.\\
    
    Finally, let's see that both $u_{p+1}=-1$ and $u_{p+1}=-2$ can produce the element $u_{p+2}=0$ without the necessity a large amount of elements equal to zero. 
    \begin{corollary}
    $u_p=-1\implies u_{p+1}=0$ as a consequence of (2.2):
    \begin{equation*}
        u_p=-1 \implies u_{p+1}=\sum^0_{i=0}(u_{p-i}+1)=-1+1=0
    \end{equation*}
    \end{corollary}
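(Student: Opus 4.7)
The plan is to apply the defining formula (2.2) directly with the hypothesis $u_p = -1$ and let the summation collapse to a single term. First I would observe that $|u_p| = 1$, so the upper bound $|u_p|-1$ in the sum of (2.2) equals $0$; the sum therefore reduces to the single summand indexed by $i=0$. Next I would compute $\sign u_p = -1$, so that the index shift $-i\,\sign u_p$ vanishes at $i=0$, and the lone summand is simply $u_p + 1$. Substituting $u_p = -1$ yields $u_{p+1} = -1 + 1 = 0$, which is the claim.

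There is no real obstacle here: the statement is a degenerate case in which the self-referential character of (2.2) disappears because only one summand survives, and that summand happens to be $u_p + 1$. It is worth contrasting this with Corollary 2.2, which is the generic negative case ($u_p \le -2$): there the summation ranges over indices strictly greater than $p$ and the resulting zero-sum condition carries real content about the sequence to the right of $p$. For $u_p = -1$, by contrast, no element to the right of $p$ is invoked at all, so the implication reduces to a single arithmetic step at position $p$ itself, requiring no knowledge of neighbouring values and no appeal to Corollary 2.2.
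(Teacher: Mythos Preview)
Your proof is correct and follows exactly the paper's own argument, which is the one-line computation embedded in the corollary statement itself: substitute $|u_p|=1$ into (2.2), collapse the sum to the single $i=0$ term, and evaluate $u_p+1=0$. Your additional remark contrasting this degenerate case with the generic negative case of Corollary~2.2 is a nice clarification, but the core argument is identical.
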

    \begin{corollary}
    $u_p=-2\implies u_{p+1}=u_{p+1}$ \textsc{(?)}. According to (2.3):
    \begin{equation*}
    \begin{split}
        u_p=-2 \implies u_{p+1}&=|-2|+\sum^{1}_{i=0}u_{p+i}\\
        &=|-2|+(-2)+u_{p+1}\\
        &=u_{p+1}
    \end{split}
    \end{equation*}    
    \end{corollary}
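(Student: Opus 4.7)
The plan is to proceed by direct substitution into equation (2.3), exactly as in the proof of Corollary 2.4. Since $u_p=-2$ is negative, we fall under Case 2, so $|u_p|=2$ and $\sign u_p=-1$. In particular the summation index runs over $i=0,1$ and each summand picks up $u_{p+i}$ rather than $u_{p-i}$, because $-i\,\sign u_p=+i$.

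With these substitutions the sum in (2.3) contains exactly two terms, $u_p$ and $u_{p+1}$, and the formula collapses to
$$u_{p+1}=2+u_p+u_{p+1}.$$
Since $2+u_p=2+(-2)=0$, the right-hand side equals $u_{p+1}$ and the recurrence reduces to the tautology $u_{p+1}=u_{p+1}$, which is what the statement asserts.

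I do not expect any real obstacle here: the computation is a one-line check, entirely parallel to the way Corollary 2.2 was extracted from Case 2 (where a different cancellation of the $i=p+1$ summand produced a non-trivial constraint instead of a tautology). The point worth emphasising in the write-up, and what the question mark in the statement is meant to flag, is the interpretive content: when $u_p=-2$ the defining equation places no constraint whatsoever on its successor $u_{p+1}$, in sharp contrast to Corollaries 2.3 and 2.4 where a specific value of $u_p$ forces a specific value of $u_{p+1}$. This indeterminacy is exactly the kind of freedom that will later allow us to manipulate or propose ultra-recursive sequences with prescribed behaviour.
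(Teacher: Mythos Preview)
Your proposal is correct and follows exactly the same direct substitution into (2.3) that the paper carries out in the statement itself. Your added interpretive remark, that $u_p=-2$ imposes no constraint on $u_{p+1}$, is precisely the point the paper makes immediately afterward.
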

    This means that $u_p=-2$ generates $u_{p+1}$ without any restriction! A $-2$ in $(u_k)$ allow us to generate any value without compromising any other element in the sequence (although its mere existence can influence the magnitude of other elements).\\
    
    Now, we have an element $u_{p-1}=-2$ that can generate $u_p=-3$, which can generate any negative value $u_{p+1}=-m$ which generate $u_{p+2}=0$ and so on. 
    %It's necessary to start talking in terms of what the transformation does to the elements instead of what elements can exist in a u-sequence
    \begin{corollary}
    There exists an ultra-recursive sequence $(u_k)$ with $u_z=-2$ for $z<0$, $u_0=-3$, $u_1=-m$ with $m \in \mathbb{N}$ and $u_n=0$ for $n>1$:
    \begin{equation*}
        (u_k)=(\ldots,-2,-2,-2,-3,-m,0,0,0,\ldots)
    \end{equation*}
    \end{corollary}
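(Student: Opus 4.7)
The plan is to verify directly that the proposed sequence $(u_k)=(\ldots,-2,-2,-2,-3,-m,0,0,\ldots)$ satisfies the defining recurrence (2.2) at every position $p\in\mathbb{Z}$. Because the sequence takes only four distinct values spread over four contiguous index ranges, I would split the verification into four cases according to $u_p$: (a) $p \le -1$, where $u_p=-2$; (b) $p=0$, where $u_0=-3$; (c) $p=1$, where $u_1=-m$; and (d) $p\ge 2$, where $u_p=0$.

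Two of these cases are essentially free. For (a), Corollary~2.5 already shows that an element equal to $-2$ collapses (2.2) into the tautology $u_{p+1}=u_{p+1}$, imposing no constraint on its successor; so every index in the left tail is automatically consistent. For (d), Corollary~2.3 (together with the convention that an empty sum is zero) yields $u_p=0\Rightarrow u_{p+1}=0$, so the right tail is self-sustaining from $p=2$ onward.

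The two substantive checks are at $p=0$ and $p=1$, and both are short direct computations with equation (2.3). At $p=0$, substituting $u_0=-3$ and $\sign u_0=-1$ produces a three-term sum over $u_0,u_1,u_2$; the $+3$ from $|u_0|$ cancels $u_0=-3$, the term $u_2=0$ drops out, and the recurrence degenerates to $u_1=u_1$, which is satisfied by \emph{any} choice of $u_1$, including $-m$. At $p=1$, substituting $u_1=-m$ and $\sign u_1=-1$ produces an $m$-term sum over $u_1,u_2,\ldots,u_m$; the $+m$ from $|u_1|$ cancels $u_1=-m$, and the remaining $m-1$ zeros contribute nothing, forcing $u_2=0$, which is exactly the value placed there.

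The only conceptual point worth flagging is why the infinite string of $-2$'s to the left creates no hidden obstruction at the boundary where the sequence transitions to $-3$. This is precisely the content of Corollary~2.5: each $-2$ leaves its successor completely free, so the choice $u_0=-3$ is consistent with every recurrence indexed by $p<0$, and no constraint propagates backward through the left tail. Hence the four case-checks above together establish the claim.
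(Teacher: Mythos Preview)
Your proposal is correct and follows essentially the same route as the paper: the corollary is presented there as the culmination of Example~1, which assembles precisely the ingredients you use---Corollary~2.5 for the $-2$ tail, Corollary~2.3 for the zero tail, and direct application of (2.3) at the two interior indices $p=0$ and $p=1$. Your write-up is simply a more systematic case split of that same argument. One small remark on wording: at $p=1$ with $m\ge 2$ the term $u_2$ appears on both sides of (2.3), so the recurrence there reduces to the tautology $u_2=u_2$ rather than ``forcing $u_2=0$''; the verification is still valid because you are checking consistency of a sequence already prescribed to have $u_2=0$.
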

\end{example}

\subsection{Some ultra-recursive sequences}
The following affirmations are easily demonstrated with the results obtained throughout this section.  
\begin{itemize}
    \item $\exists \ (u_k) \colon u_p=0 \ \ \forall \ \ p \in \mathbb{Z}$
    \item $\exists \ (u_k) \colon u_p=-2 \ \ \forall \ \ p \in \mathbb{Z}$
    \item $\exists \ (u_k) \colon u_z = -2 \ \ \forall \ \ z<p \ \ \wedge \ \ u_{n} = 0 \ \ \forall \ \ p+1 \leq n$
    \begin{equation*}
        (u_k)=(\ldots,-2,-2,-2,u_p,u_{p+1},\ldots,0,0,0,\ldots)
    \end{equation*}
    There are countless combinations of values for $u_p$ and its closest successors.
    \item $\exists \ (u_k) \colon u_z=-2 \ \ \forall \ \ z<0 \ \ \wedge \ \ u_0=m \colon m \in \mathbb{Z}^+$. We'll define $\boldsymbol{\Pi}$ to the sequence containing all the possible sequences $(\pi_{m,k})$: alluding the number $m$ in position $0$ (i.e. $\pi_{m,0}=m$).\\
    The elements $\pi_{m,n}$ for $n\geq0$ will be studied in the next section.
\end{itemize}

\section{$\boldsymbol{\Pi}$ sequence}
%Detallas más la aparición de un valor m después de -2
We've generated some eigen-sequences of the transformation $\textsc{O}$, but we aren't calculating values with an explicit formula, instead we are \textbf{discovering} values that satisfy our definitions. 

Nevertheless, there are ultra-recursive sequences that are partially periodic. In the past section we found sequences $(u_k)$ with infinite terms equal to $-2$ through the left. 
\begin{definition}
The sequence of sequences $\boldsymbol{\Pi}$ has elements $(\pi_{m,k})$ who are eigen-sequences of the transformation $\textsc{O}$.
\begin{equation*}
    \boldsymbol{\Pi} \equiv ((\pi_{m,k})_{k\in\mathbb{Z}})_{m\in\mathbb{Z}^+} \colon \quad \pi_{m,z}=-2 \ \ \forall \ \ z<0 \ \ \ \textit{and} \ \ \ \pi_{m,0}=m
\end{equation*}
\end{definition}

We can generate the terms $\pi_{m,n}$ for $n>0$ iteratively using equation (2.2) or (2.3). The correspondent matrix for $\boldsymbol{\Pi}$ is 
\setcounter{MaxMatrixCols}{20}
\begin{equation*}
    \begin{pmatrix}
    \ldots  & -2 & -2 & -2 & 1 & 2 & 5 & 9 & 16 & 27 & 45 & 74 & \ldots\\
    \ldots  & -2 & -2 & -2 & 2 & 2 & 6 & 10 & 18 & 30 & 50 & 82 & \ldots\\
    \ldots  & -2 & -2 & -2 & 3 & 2 & 7 & 11 & 20 & 33 & 55 & 90 & \ldots\\
    \ldots  & -2 & -2 & -2 & 4 & 2 & 8 & 12 & 22 & 36 & 60 & 98 & \ldots\\
    \ldots  & -2 & -2 & -2 & 5 & 2 & 9 & 13 & 24 & 39 & 65 & 106 & \ldots\\
    \ldots  & -2 & -2 & -2 & 6 & 2 & 10 & 14 & 26 & 42 & 70 & 114 & \ldots\\
    \ldots  & -2 & -2 & -2 & 7 & 2 & 11 & 15 & 28 & 45 & 75 & 122 & \ldots\\
    \ldots  & -2 & -2 & -2 & 8 & 2 & 12 & 16 & 30 & 48 & 80 & 130 & \ldots\\
    \reflectbox{$\ddots$} &
    \vdots & \vdots &\vdots & \vdots & \vdots & \vdots & \vdots & \vdots & \vdots &\vdots & \vdots & \ddots
    \end{pmatrix}
\end{equation*}

What patterns or properties can you find just by looking?

\begin{definition}
The summation of the first $n$ successors of $\pi_{m,-1}$ and the summation of the first $n$ predecessors of $\pi_{m,0}$
\begin{equation*}
     S^m_n\equiv\sum^{n-1}_{i=0}\pi_{m,i}, \ \ \ R^m_n\equiv \sum^{-1}_{i=-n}\pi_{m,i}
 \end{equation*}
\end{definition}
\begin{corollary}
From Definition 3.1, we know that $R^m_n=n(-2)$.
\end{corollary}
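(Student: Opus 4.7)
The plan is to observe that this is an immediate consequence of Definition 3.1 together with the definition of $R^m_n$. By Definition 3.1, every index $z<0$ satisfies $\pi_{m,z}=-2$, and the summation range $i=-n,\ldots,-1$ in Definition 3.2 consists entirely of strictly negative indices. So the first step is simply to note that every summand equals $-2$.

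The second step is to count summands: the index $i$ ranges from $-n$ up to $-1$ inclusive, giving $n$ terms. Multiplying the common value $-2$ by the number of terms yields
\begin{equation*}
R^m_n=\sum_{i=-n}^{-1}\pi_{m,i}=\sum_{i=-n}^{-1}(-2)=n(-2),
\end{equation*}
which is exactly the claimed identity.

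There is no real obstacle here; the statement is a direct substitution and a cardinality count, with the only mild subtlety being the convention that the lower index $-n$ is included in the sum. Once that is read off from Definition 3.2, the proof is complete in a single line.
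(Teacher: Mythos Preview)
Your proof is correct and matches the paper's approach exactly: the paper states this corollary without a separate proof, treating it as an immediate consequence of Definition~3.1 (that $\pi_{m,z}=-2$ for $z<0$) together with Definition~3.2. Your one-line substitution and term count is precisely the intended argument.
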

\begin{theorem}
    For every $\pi_{m,n+1}$ with $n\geq0$
    \begin{equation}
        \pi_{m,n+1}=2+\sum^{n-1}_{i=0}(\pi_{m,i}+2)=S^m_n+2n+2
    \end{equation}
\end{theorem}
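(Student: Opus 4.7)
The plan is to prove the identity by strong induction on $n\geq 0$, coupled with the auxiliary inequality $\pi_{m,n}\geq n+1$; the two statements reinforce each other, and both are needed because the simple closed form holds precisely when the summation window in equation (2.6) reaches back into the left tail of $-2$'s.

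For the base case $n=0$ one has $\pi_{m,0}=m\geq 1=0+1$, and (2.6) applied at $p=0$ produces $\pi_{m,1}=2+\sum_{i=1-m}^{-1}(\pi_{m,i}+2)=2$, since each summand equals $(-2)+2=0$; this matches $S_0^{m}+2$ because $S_0^{m}$ is an empty sum.

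For the inductive step, suppose that $\pi_{m,j}\geq j+1$ for $0\leq j\leq n$. Since $\pi_{m,n}>0$, Case 1 applies and (2.6) yields
$$\pi_{m,n+1}=2+\sum_{i=n+1-\pi_{m,n}}^{n-1}(\pi_{m,i}+2).$$
The crucial point is that $\pi_{m,n}\geq n+1$ makes the lower index $n+1-\pi_{m,n}$ non-positive, so I can split the sum according to the sign of $i$. Every summand with $i<0$ contributes $\pi_{m,i}+2=(-2)+2=0$ by Definition 3.1, while the summands with $0\leq i\leq n-1$ contribute $\sum_{i=0}^{n-1}(\pi_{m,i}+2)=S_n^{m}+2n$. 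Therefore $\pi_{m,n+1}=S_n^{m}+2n+2$, which is exactly the identity claimed.

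To close the induction, I combine the formula just obtained with the inductive hypothesis. Since $\pi_{m,j}\geq 1$ for $0\leq j\leq n-1$, the partial sum satisfies $S_n^{m}\geq n$, and therefore $\pi_{m,n+1}\geq 3n+2\geq n+2$, which propagates the auxiliary bound. The main obstacle is precisely this interplay between the bound and the formula: without $\pi_{m,n}\geq n+1$ at each step, the summation window in (2.6) would stop in the non-negative region and the $-2$ cancellation would fail, whereas the formula $S_n^{m}+2n+2$ is what delivers the bound at the next level. Once both are carried in tandem, the induction runs uniformly for every $m\in\mathbb{Z}^+$ and every $n\geq 0$.
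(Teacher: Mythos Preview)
Your proof is correct and follows essentially the same approach as the paper: both use equation (2.6), split the summation at $i=0$ so that the terms with negative index cancel to zero, and then propagate the inequality $\pi_{m,n}>n$ inductively to guarantee the split is legitimate at every step. Your presentation is somewhat cleaner in that you carry the auxiliary bound $\pi_{m,n}\geq n+1$ explicitly through strong induction and verify the base case $n=0$ directly, whereas the paper handles these points more informally.
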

\begin{proof}
    If $\pi_{m,n}>n>0$, we can use (2.6) as follows:
    \begin{equation*}
        \pi_{m,n+1}=2+\sum^{n-1}_{i=n+1-\pi_{m,n}}(\pi_{m,i}+2) =  2+\sum^{n-1}_{i=0}(\pi_{m,i}+2)+\sum^{-1}_{i=n+1-\pi_{m,n}}(\pi_{m,i}+2)
    \end{equation*}
    by Definition 3.1, we know $\pi_{m,z}=-2$ for $z<0$. Therefore:
    \begin{equation*}
    \begin{split}
        \pi_{m,n+1}&=2+\sum^{n-1}_{i=0}(\pi_{m,i}+2)+\sum^{-1}_{i=n+1-\pi_{m,n}}(-2+2)\\
        &=2+\sum^{n-1}_{i=0}(\pi_{m,i}+2)=S^m_n+2n+2\\
    \end{split}
    \end{equation*}
    If $S^m_n>\pi_{m,n}$, the last equation gives $\pi_{m,n+1}$ a positive value and it can be used for $\pi_{m,n+2}$ (since $\pi_{m,n+1}>n+1>0$ was the first condition of the proof) and for induction it can be used for all next elements. Since $S^m_0$ is exactly zero and $\pi_{m,0}$ is always greater than zero, this theorem is valid for $\pi_{m,n+1}$ with $n\geq 0$.
\end{proof}
\begin{corollary}
    In $(\pi_{m,n})$, all the successors of $\pi_{m,0}$ are positives and the first ones are
    \begin{equation*}
        \pi_{m,1}=2, \ \pi_{m,2}=m+4, \ \pi_{m,3}=m+8, \ \pi_{m,4}=2m+14, \ldots
    \end{equation*}
\end{corollary}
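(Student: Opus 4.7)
The plan is to deduce this Corollary directly from Theorem 3.1, which gives the compact recurrence $\pi_{m,n+1}=S^m_n+2n+2$ with $S^m_0=0$. The explicit values of $\pi_{m,1},\pi_{m,2},\pi_{m,3},\pi_{m,4}$ are then obtained by iterating this formula four times, and the positivity claim is handled by an induction on $n$ that feeds back into the same recurrence.

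First I would evaluate the Corollary's four stated values in order. Start with $S^m_0=0$, which gives $\pi_{m,1}=2$. Then use $S^m_1=\pi_{m,0}=m$ to obtain $\pi_{m,2}=m+4$. Next $S^m_2=m+\pi_{m,1}=m+2$ yields $\pi_{m,3}=m+8$, and finally $S^m_3=S^m_2+\pi_{m,2}=2m+6$ gives $\pi_{m,4}=2m+14$. Each of these is a one-line arithmetic step, so I would simply display them in a brief chain.

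For the positivity of every successor $\pi_{m,n+1}$ with $n\ge 0$, I would induct on $n$. The base case $\pi_{m,1}=2>0$ is already established. For the inductive step, assume $\pi_{m,j}>0$ for all $0\le j\le n$; then $S^m_n=\sum_{i=0}^{n-1}\pi_{m,i}\ge 0$ (the term $\pi_{m,0}=m$ is positive, and by hypothesis the rest are nonnegative), so Theorem 3.1 gives $\pi_{m,n+1}=S^m_n+2n+2\ge 2n+2>0$. This closes the induction and simultaneously confirms that the hypothesis $S^m_n>\pi_{m,n}$ used at the end of the proof of Theorem 3.1 is consistent throughout $\boldsymbol{\Pi}$.

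There is no real obstacle here: the content of the Corollary is essentially a corollary readout of Theorem 3.1 together with the initial data $\pi_{m,-k}=-2$ and $\pi_{m,0}=m$. The only point deserving care is the bookkeeping on $S^m_n$, since the recurrence couples $\pi_{m,n+1}$ to the full partial sum, so I would state the small identity $S^m_{n+1}=S^m_n+\pi_{m,n}$ once and then let the four displayed values fall out mechanically.
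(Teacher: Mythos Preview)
Your proposal is correct and is exactly the intended reading of the Corollary: the paper offers no separate proof here, treating the values and positivity as immediate from Theorem~3.1, and your iteration of $\pi_{m,n+1}=S^m_n+2n+2$ together with the induction on $n$ is precisely that unpacking. The one small remark is that your aside about confirming ``$S^m_n>\pi_{m,n}$'' is not quite what you establish (you show $S^m_n\ge 0$ and $\pi_{m,n+1}\ge 2n+2$, hence $\pi_{m,n}>n$, which is the actual hypothesis driving the induction in the proof of Theorem~3.1); you may want to rephrase or drop that sentence, but it does not affect the argument.
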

Note that $\pi_{m,1}=2$ is the only constant value or the only element independent of $m$ in $(\pi_{m,k})$. 

It's natural to wonder if there is a more organic relation between one element and the previous ones.

\begin{theorem}
Any element of $(\pi_{m,k})$ except from $\pi_{m,0}$ and $\pi_{m,1}$ is equal to the sum of the two previous elements plus two.
\begin{equation}
    \pi_{m,p}=\pi_{m,p-1}+\pi_{m,p-2}+2 \ \ \forall p \neq 0, 1
\end{equation}
\end{theorem}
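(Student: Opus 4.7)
The plan is to read off Theorem 3.1 twice and take a difference, which reduces the claim to the trivial identity $S^m_{p-1}-S^m_{p-2}=\pi_{m,p-2}$.

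More precisely, I would split according to the sign of $p$. For $p\ge 2$, both $p$ and $p-1$ are of the form $n+1$ with $n\ge 0$, so Theorem 3.1 applies to each:
\begin{equation*}
    \pi_{m,p}=S^m_{p-1}+2(p-1)+2=S^m_{p-1}+2p,
\end{equation*}
\begin{equation*}
    \pi_{m,p-1}=S^m_{p-2}+2(p-2)+2=S^m_{p-2}+2p-2.
\end{equation*}
Subtracting, and using the telescoping identity $S^m_{p-1}-S^m_{p-2}=\pi_{m,p-2}$ that is immediate from Definition 3.1, yields $\pi_{m,p}-\pi_{m,p-1}=\pi_{m,p-2}+2$, which is exactly (3.2). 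The case $p=2$ deserves a separate sanity check because it involves $\pi_{m,0}=m$ rather than a value produced by Theorem 3.1; but $S^m_1=\pi_{m,0}=m$, so Theorem 3.1 gives $\pi_{m,2}=m+4=2+m+2=\pi_{m,1}+\pi_{m,0}+2$, consistent with Corollary 3.2.

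For $p\le -1$, there is nothing to prove in the analytic sense: by Definition 3.1 we have $\pi_{m,p}=\pi_{m,p-1}=\pi_{m,p-2}=-2$, and $-2+(-2)+2=-2$, so (3.2) holds termwise. This covers all $p\neq 0,1$ as claimed. The exclusion of $p=0,1$ is exactly the range where Theorem 3.1 is unavailable for both of the two preceding terms simultaneously (and where $\pi_{m,0}=m$ is a free parameter), so the restriction in the statement is sharp rather than incidental.

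There is no real obstacle here: the only subtlety is keeping track of which indices are allowed to use Theorem 3.1 (one needs $p-1\ge 1$, i.e.\ $p\ge 2$), and verifying the boundary case $p=2$ by hand since $\pi_{m,0}$ is not itself produced by the recursion. Everything else is a one-line telescoping argument.
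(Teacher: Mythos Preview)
Your proof is correct and follows essentially the same approach as the paper's: split into $p\ge 2$ (use Theorem~3.1 and the telescoping identity $S^m_{p-1}-S^m_{p-2}=\pi_{m,p-2}$) and $p\le -1$ (trivial from the constant value $-2$). The paper applies Theorem~3.1 once and regroups, while you apply it twice and subtract, but these are algebraically the same manipulation; your separate check at $p=2$ is harmless but unnecessary, since the general subtraction already covers it (with $S^m_0=0$). One small labeling slip: the telescoping identity comes from Definition~3.2 (the definition of $S^m_n$), not Definition~3.1.
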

\begin{proof}
For $z<0$, from Definition 3.1
\begin{equation*}
\pi_{m,z+1}=-2=-2-2+2=\pi_{m,z-1}+\pi_{m,z-2}+2
\end{equation*}
For $n+1\geq2$, we can use Theorem 3.1 as follows
\begin{equation*}
\begin{split}
    \pi_{m,n+1}&=S^m_n+2n+2=(S^m_{n-1}+\pi_{m,n-1})+2n+2\\
    &=(S^m_{n-1}+2(n-1)+2)+\pi_{m,n-1}+2\\
    &=\pi_{m,n}+\pi_{m,n-1}+2
\end{split}
\end{equation*}
\end{proof}

Theorem 3.2 tell us that an infinite subset of $(\pi_{m,k})$ (actually the totality of it minus two values) satisfies not only (2.2) but also a recurrence relation in the traditional fashion. The next natural step is to find the closed form expression for any sequence in $\boldsymbol{\Pi}$.

\begin{theorem}
For $n\geq0$, the function that generates $\pi_{m,n}$ is:
\begin{equation}
    \pi_{m,n}=B_m\varphi^n+C_m\psi^n-2
\end{equation}
or equivalently with $b_m=\sqrt{5}B_m$ and $c_m=\sqrt{5}C_m$ 
\begin{equation*}
    \pi_{m,n}=\frac{1}{\sqrt{5}}(b_m\varphi^n+c_m\psi^n)-2
\end{equation*}
Where the constants $B_m$ and $C_m$ are defined as follows
\begin{equation*}
\begin{split}
    &B_m \equiv \frac{(m+2)\varphi+2-m}{\sqrt{5}}=\frac{2\varphi^2+m\varphi^{-1}}{\sqrt{5}},\\ 
    &C_m \equiv \frac{(m+2)\varphi-4}{\sqrt{5}}=B_m+\frac{m-6}{\sqrt{5}}
\end{split}
\end{equation*}
\end{theorem}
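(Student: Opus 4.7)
The plan is to recognize the sequence $(\pi_{m,n})_{n \geq 0}$ as a shifted Fibonacci-type sequence and then apply the closed-form machinery developed in Section 1. By Theorem 3.2, for $n \geq 2$ the inhomogeneous recurrence $\pi_{m,n} = \pi_{m,n-1} + \pi_{m,n-2} + 2$ holds. A constant particular solution $c$ must satisfy $c = 2c + 2$, so $c = -2$. Defining $\Lambda_n := \pi_{m,n} + 2$, the shifted sequence obeys the homogeneous Fibonacci recurrence (1.4) for $n \geq 2$, and (1.8) with $e = 0$ then gives $\Lambda_n = B_m \varphi^n + C_m \psi^n$, i.e.\ $\pi_{m,n} = B_m \varphi^n + C_m \psi^n - 2$, which is the announced form.

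Next I would pin down $B_m$ and $C_m$ by matching the two known initial values $\pi_{m,0} = m$ (Definition 3.1) and $\pi_{m,1} = 2$ (Corollary 3.2). This produces the $2 \times 2$ system $B_m + C_m = m + 2$ and $B_m \varphi + C_m \psi = 4$, and using $\varphi - \psi = \sqrt{5}$ together with $\psi = 1 - \varphi$ yields $B_m = [(m+2)\varphi + 2 - m]/\sqrt{5}$ and $C_m = [(m+2)\varphi - 4]/\sqrt{5}$. The alternative expressions in the statement are then cosmetic: $\varphi^2 = \varphi + 1$ and $-\psi = \varphi^{-1}$ convert $B_m$ into $(2\varphi^2 + m\varphi^{-1})/\sqrt{5}$, and a direct subtraction gives $C_m - B_m = (m-6)/\sqrt{5}$.

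The induction step that propagates the formula for $n \geq 2$ is automatic from Theorem 3.2 together with property (1.5) satisfied by both $\varphi$ and $\psi$. The one subtlety I would watch for is that Theorem 3.2 explicitly excludes $p \in \{0,1\}$, so the recurrence cannot be used to define the formula across those indices; but since we anchor the constants at $n = 0$ and $n = 1$ directly, this is merely a bookkeeping check rather than a genuine obstacle. Overall, the only real content is the algebraic rearrangement of the two initial conditions into the specific forms stated for $B_m$ and $C_m$; everything else is a standard application of the solution method for second-order linear recurrences with the golden-ratio characteristic equation.
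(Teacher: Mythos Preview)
Your proposal is correct and follows essentially the same approach as the paper: use the inhomogeneous recurrence from Theorem 3.2, reduce to the homogeneous Fibonacci recurrence via the constant shift $-2$, and then determine $B_m,C_m$ by matching the initial values $\pi_{m,0}=m$ and $\pi_{m,1}=2$ through the same $2\times 2$ system. If anything, your write-up is more explicit than the paper's (which simply cites a general closed form for $\Lambda_n=\Lambda_{n-1}+\Lambda_{n-2}+\varepsilon$ and then solves the linear system), and your remark about anchoring at $n=0,1$ to sidestep the exclusion in Theorem 3.2 is a correct observation the paper leaves implicit.
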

\begin{proof}
In Complementary Information, we've established that for the recurrence relation $\Lambda_n=\Lambda_{n-1}+ +\Lambda_{n-2}+\varepsilon$, exists the closed form solution
\begin{equation*}
    \Lambda_n=\beta \varphi^n+ \gamma \psi^n - \varepsilon
\end{equation*}
Therefore, we only need to find the constants that generate the two initial terms: $B_m+C_m-2=m, \ B_m\varphi+C_m\psi-2=2$. After solving this by the same method mentioned in the Introduction, we get:
\begin{equation*}
    \begin{pmatrix}
    B_m\\C_m
    \end{pmatrix}=\frac{1}{\sqrt{5}}\begin{pmatrix}
    (m+2)\varphi^{-1}+4\\(m+2)\varphi-4
    \end{pmatrix}=\frac{1}{\sqrt{5}}\begin{pmatrix}
    (m+2)\varphi+2-m\\(m+2)\varphi-4
    \end{pmatrix}
\end{equation*}

\end{proof}
\begin{theorem}
The relation between the $n$th term of any two sequences in $\boldsymbol{\Pi}$: $(\pi_{m,k})$ and $(\pi_{t,k})$.%cuidar para qué m's y t's se cumple verdaderamente esto
\begin{equation}
    \pi_{m,n}=\pi_{t,n}+(m-t)F_{n-1}
\end{equation}
Where $F_{n-1}$ is a term from the Fibonacci sequence
\end{theorem}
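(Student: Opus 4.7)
The plan is to reduce to the Fibonacci recurrence by studying the difference $D_n := \pi_{m,n} - \pi_{t,n}$. Theorem 3.2 says that for each fixed $m$ the sequence $(\pi_{m,k})$ satisfies the inhomogeneous two-term recurrence $\pi_{m,p} = \pi_{m,p-1} + \pi_{m,p-2} + 2$ for every $p \neq 0, 1$, and the same holds with $t$ in place of $m$. Subtracting these two identities, the constant term $+2$ cancels, so $D_p = D_{p-1} + D_{p-2}$ for all $p \neq 0, 1$; in particular $D$ satisfies the pure Fibonacci recurrence on $p \geq 2$.

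With the recurrence in hand, only two base cases need to be computed. From Definition 3.1, $\pi_{m,0} = m$ and $\pi_{t,0} = t$, so $D_0 = m-t$. From Corollary 3.3, $\pi_{m,1} = \pi_{t,1} = 2$, so $D_1 = 0$. The conjectured right-hand side $(m-t)F_{n-1}$, read with the standard extension $F_{-1} = 1$ of the Fibonacci sequence to negative indices, gives $(m-t)F_{-1} = m-t$ at $n=0$ and $(m-t)F_{0} = 0$ at $n=1$, matching $D_0$ and $D_1$. Since both sides therefore satisfy $x_n = x_{n-1} + x_{n-2}$ for $n \geq 2$ and agree at the two preceding indices, a one-line induction closes the argument for every $n \geq 0$.

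An alternative, essentially equivalent route is to subtract the closed forms from Theorem 3.3 directly: one obtains $D_n = \tfrac{m-t}{\sqrt{5}}\bigl(\varphi^{-1}\varphi^{n} + \varphi\psi^{n}\bigr)$, and the identities $\varphi\psi = -1$ and $\varphi^{-1} = -\psi$ collapse the bracket to $\varphi^{n-1} - \psi^{n-1}$, which is $\sqrt{5}\,F_{n-1}$ by Binet's formula (1.6). Neither route presents a genuine obstacle; the only delicate point is the indexing convention, namely that the formula involves $F_{n-1}$ rather than $F_n$. This shift is forced by the fact that the two sequences already differ by $m-t$ at $n = 0$, which corresponds to $F_{-1} = 1$ and not $F_1$, and by the coincidence $\pi_{m,1} = \pi_{t,1} = 2$ that kills the $n=1$ term.
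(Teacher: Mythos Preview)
Your proof is correct. Your primary argument---taking the difference $D_n=\pi_{m,n}-\pi_{t,n}$, cancelling the inhomogeneous $+2$ via Theorem~3.2, and then matching two base values against $(m-t)F_{n-1}$---is more elementary than the paper's route. The paper instead works entirely through the Binet-type closed form of Theorem~3.3: it expands $\pi_{t,n}+(m-t)F_{n-1}$ in powers of $\varphi$ and $\psi$ and checks algebraically that the resulting coefficients coincide with $b_m$ and $c_m$. Your approach bypasses all the golden-ratio algebra and needs only the recurrence and the initial values; the paper's approach, in exchange, makes the connection to the explicit constants transparent. Your ``alternative route'' via subtracting the closed forms is essentially the paper's computation carried out in the opposite direction. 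One small bookkeeping remark: the fact $\pi_{m,1}=2$ that you invoke is Corollary~3.2 in the paper's numbering; Corollary~3.3 lies downstream of the present theorem, so citing it here would be circular.
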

\begin{proof} Theorem 3.3 and the closed form formula of the Fibonacci sequence imply 
    \begin{equation*}
        \pi_{t,n}+(m-t)F_{n-1}=\frac{1}{\sqrt{5}}\Biggl[\Biggl(b_t+\frac{m-t}{\varphi}\Biggr)\varphi^n+\Biggl(c_t+\frac{t-m}{\psi}\Biggr)\psi^n\Biggr]
    \end{equation*}
    but
    \begin{equation*}
    \begin{split}
        b_t+\frac{m-t}{\varphi}&=[(t+2)\varphi+2-t]\ + \ (m-t)(\varphi-1)=(m+2)\varphi+2-m=b_m\\
        c_t+\frac{t-m}{\psi}&=[(t+2)\varphi-4]\ + \ (t-m)(-\varphi)=(m+2)\varphi-4=c_m
    \end{split}
    \end{equation*}
    Therefore
    \begin{equation*}
    \begin{split}
        \pi_{t,n}+(m-t)F_{n-1}&=\frac{1}{\sqrt{5}}[b_m\varphi^n+c_m\psi^n]\\
        &=\pi_{m,n}
    \end{split}
    \end{equation*}
\end{proof}

We are now able to express any element of $(\pi_{m,n})$ as a function of two consecutive values of a given sequence $(\pi_{t,n})$

\begin{theorem}
The element $\pi_{m,n}$ in function of $\pi_{t,e}$ and $\pi_{t,e+1}$.
\begin{equation}
    \pi_{m,n}=\pi_{t,e} F_{n-e-1}+\pi_{t,e+1} F_{n-e}+2F_{n-e+1}+(m-t)F_{n-1}-2
\end{equation}
\end{theorem}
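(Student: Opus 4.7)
The plan is to combine Theorem 3.4, which relates any two sequences in $\boldsymbol{\Pi}$ by a Fibonacci shift, with a standard interpolation identity for sequences satisfying the Fibonacci recurrence. Since Theorem 3.4 already gives $\pi_{m,n} = \pi_{t,n} + (m-t)F_{n-1}$, it suffices to establish the sub-identity
$$\pi_{t,n} = \pi_{t,e}\, F_{n-e-1} + \pi_{t,e+1}\, F_{n-e} + 2F_{n-e+1} - 2$$
and then add the $(m-t)F_{n-1}$ correction term.

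For this sub-identity, I would introduce the shifted sequence $g_n := \pi_{t,n} + 2$. By Theorem 3.2, $g_n = g_{n-1} + g_{n-2}$ at every index where Theorem 3.2 applies, so $g$ is a genuine Fibonacci-type sequence. Any such sequence satisfies the classical two-point interpolation identity
$$g_n = g_e\, F_{n-e-1} + g_{e+1}\, F_{n-e},$$
which follows directly from the closed-form formula $\Lambda_k = \beta_e \varphi^{k-e} + \gamma_e \psi^{k-e}$ of the Introduction (or, alternatively, by checking $n=e$ and $n=e+1$ using $F_{-1}=1$, $F_0=0$, $F_1=1$ and then inducting on $n$ via the Fibonacci recurrence on both sides). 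Substituting $g_n = \pi_{t,n}+2$ gives
$$\pi_{t,n} + 2 = (\pi_{t,e}+2)F_{n-e-1} + (\pi_{t,e+1}+2)F_{n-e},$$
and simplifying the constant terms using $F_{n-e-1} + F_{n-e} = F_{n-e+1}$ yields exactly the displayed sub-identity. Combining with Theorem 3.4 then produces the statement of Theorem 3.5.

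The main obstacle is less a difficulty than a caveat about the range of validity: Theorem 3.2 asserts the Fibonacci recurrence for $p \neq 0, 1$, so the interpolation argument propagates freely on each side of the ``gap'' at $p=0,1$ but not across it. A fully rigorous version therefore requires that the induction path from $(e,e+1)$ to $n$ does not straddle both of these indices; restricting to $n, e \geq 0$ (the regime of Theorem 3.3, where a uniform closed form is available) is the cleanest way to handle this, and on that region the two ingredients above combine to give the claimed formula without further ado.
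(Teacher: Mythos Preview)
Your proposal is correct and follows essentially the same route as the paper: both first establish the sub-identity $\pi_{t,n} = \pi_{t,e}F_{n-e-1} + \pi_{t,e+1}F_{n-e} + 2F_{n-e+1} - 2$ (the paper's equation (3.7)) and then invoke Theorem~3.4 to append the $(m-t)F_{n-1}$ correction. The only cosmetic difference is that the paper obtains (3.7) by explicitly solving for the constants $B_{t,e},C_{t,e}$ in the $\varphi,\psi$ closed form (3.6), whereas you reach it via the shift $g_n=\pi_{t,n}+2$ and the standard Fibonacci two-point interpolation identity---two phrasings of the same computation.
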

\begin{proof}

From (1.6), we know there are several ways to express the closed-form solution for any sequence in terms of two consecutive values. In the case of $(\pi_{m,n})$, the equation will be similar to (3.3)
\begin{equation}
    \pi_{t,n}=B_{t,e}\varphi^{n-e}+C_{t,e}\psi^{n-e}-2
\end{equation}
Where $e=0$ is exactly (3.3) with $B_{t,0}=B_t$ and $C_{t,0}=C_t$. For any $e$
\begin{equation*}
    \begin{pmatrix}
    1 & 1\\\varphi & \psi
    \end{pmatrix}
    \begin{pmatrix}
    B_{t,e}\\C_{t,e}
    \end{pmatrix}=
    \begin{pmatrix}
    \pi_{t,e}+2\\\pi_{t,e+1}+2
    \end{pmatrix}\\
\end{equation*}
and the constants $\begin{pmatrix}B_{t,e}\\C_{t,e}\end{pmatrix}$ are
\begin{equation*}
    \frac{1}{\sqrt{5}}\begin{pmatrix}
    \pi_{t,e}\varphi^{-1}+\pi_{t,e+1}+2\varphi\\\pi_{t,e}\varphi-\pi_{t,e+1}-2\psi\end{pmatrix}=\frac{\pi_{t,e}}{\sqrt{5}}\begin{pmatrix}
    \varphi^{-1}\\-\psi^{-1}
    \end{pmatrix}+\frac{\pi_{t,e+1}}{\sqrt{5}}\begin{pmatrix}
    1\\-1
    \end{pmatrix}+\frac{2}{\sqrt{5}}\begin{pmatrix}
    \varphi\\-\psi
    \end{pmatrix}
\end{equation*}

This gives (3.6) a new expression
\begin{equation}
    \pi_{t,n}=\pi_{t,e} F_{n-e-1}+\pi_{t,e+1} F_{n-e}+2F_{n-e+1}-2
\end{equation}
Applying Theorem 3.4 to (3.7) leads directly (3.5).
\end{proof}

\begin{corollary}
    From (3.7), for $e=0$, we have a new way to view equation (3.3)
    \begin{equation}
        \pi_{m,n}=mF_{n-1}+2F_{n+2}-2
    \end{equation}
\end{corollary}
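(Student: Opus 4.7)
The plan is to specialize Theorem 3.5, equation (3.7), at $e=0$ and then combine Fibonacci terms using their defining recurrence. Since (3.7) holds for any sequence in $\boldsymbol{\Pi}$, I would first rewrite it with the label $m$ in place of $t$, so that at $e=0$ it reads
\begin{equation*}
\pi_{m,n} = \pi_{m,0}\, F_{n-1} + \pi_{m,1}\, F_n + 2 F_{n+1} - 2 .
\end{equation*}

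Next, I would substitute the two known initial values. By Definition 3.1, $\pi_{m,0} = m$. For the second initial value, Corollary 3.2 (the corollary that lists the first successors of $\pi_{m,0}$) asserts $\pi_{m,1} = 2$, and this is exactly the one element of $(\pi_{m,k})$ that is independent of $m$. Plugging these in gives
\begin{equation*}
\pi_{m,n} = m F_{n-1} + 2 F_n + 2 F_{n+1} - 2 = m F_{n-1} + 2(F_n + F_{n+1}) - 2 .
\end{equation*}

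Finally, I would invoke the Fibonacci recurrence $F_{n+2} = F_{n+1} + F_n$ (equivalently, equation (1.5) with $\phi$ interpreted as the shift on Fibonacci numbers) to collapse the bracketed sum, producing $\pi_{m,n} = m F_{n-1} + 2 F_{n+2} - 2$, which is (3.8).

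There is essentially no obstacle here: all the non-trivial content (the closed form in terms of two consecutive entries, the appearance of $F_{n-e-1}$, $F_{n-e}$, $F_{n-e+1}$) has already been established in Theorem 3.5, and this corollary is a one-line specialization. The only small thing to check is that the Fibonacci indexing used in (3.7) is consistent with the one in (1.6) so that the reduction $F_n + F_{n+1} = F_{n+2}$ really applies; this is immediate from how the constants $B_{t,e}$ and $C_{t,e}$ were converted into Fibonacci terms in the proof of Theorem 3.5.
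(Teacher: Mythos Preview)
Your proof is correct and follows exactly the approach the paper intends: the corollary is stated without a separate proof precisely because it is the one-line specialization of (3.7) at $e=0$, using $\pi_{m,0}=m$, $\pi_{m,1}=2$, and $F_n+F_{n+1}=F_{n+2}$. There is nothing to add.
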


\subsection{The $k$th difference sequence of $(\pi_{m,n})$ and their relations with other sequences}

\begin{definition}
The first difference sequence for $(\pi_{m,n})$:
\begin{equation}
    \Delta (\pi_{m,n})=\Delta^1 (\pi_{m,n})\equiv \pi_{m,n+1}-\pi_{m,n}
\end{equation}

And for $k>1$, the $k$th difference of $(\pi_{m,n})$:
\begin{equation}
    \Delta^k(\pi_{m,n})\equiv\Delta^{k-1}(\pi_{m,n+1})-\Delta^{k-1}(\pi_{m,n})
\end{equation}
\end{definition}
\begin{corollary}
From Corollary 3.3, if $n>0$:
\begin{equation}
   \Delta^k (\pi_{m,n})=mF_{n-1-k}+2F_{n+2-k}
\end{equation}
\end{corollary}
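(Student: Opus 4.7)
The plan is to proceed by induction on $k$, using the closed-form expression $\pi_{m,n}=mF_{n-1}+2F_{n+2}-2$ from the immediately preceding corollary and the defining Fibonacci identity $F_{j+1}-F_j=F_{j-1}$. The key observation is that the constant term $-2$ will be annihilated by the very first differencing operation, so that from $k=1$ onward the sequence $\Delta^k(\pi_{m,n})$ is simply a $\mathbb{Z}$-linear combination of two shifted Fibonacci terms.

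First I would treat the base case $k=1$ by direct computation:
\begin{equation*}
\Delta(\pi_{m,n})=\pi_{m,n+1}-\pi_{m,n}=m(F_n-F_{n-1})+2(F_{n+3}-F_{n+2}),
\end{equation*}
and then apply $F_{j+1}-F_j=F_{j-1}$ twice to collapse this to $mF_{n-2}+2F_{n+1}$, which agrees with the asserted formula for $k=1$.

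For the inductive step, assume $\Delta^k(\pi_{m,n})=mF_{n-1-k}+2F_{n+2-k}$ for all $n>0$. Then by Definition 3.2,
\begin{equation*}
\Delta^{k+1}(\pi_{m,n})=\Delta^k(\pi_{m,n+1})-\Delta^k(\pi_{m,n})=m(F_{n-k}-F_{n-1-k})+2(F_{n+3-k}-F_{n+2-k}),
\end{equation*}
and one further application of the Fibonacci recurrence to each parenthesis yields $mF_{n-2-k}+2F_{n+1-k}$, exactly the statement for $k+1$. The induction then closes.

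I do not expect a genuine obstacle here; the argument is essentially the observation that the operator $\Delta$ acts as a backward index shift on any sequence of the form $n\mapsto aF_{n+\alpha}+bF_{n+\beta}$ and kills constants, so iterating it just shifts the Fibonacci indices down by one each time. The only subtle point worth flagging is the range of validity: the closed form from Corollary~3.3 has been proved for $n\geq 0$, so $\Delta^k(\pi_{m,n})$ is expressible via the formula whenever all of $n,n+1,\dots,n+k$ are nonnegative, which the hypothesis $n>0$ comfortably ensures (if one is willing to invoke the standard extension $F_{-j}=(-1)^{j+1}F_j$, the identity even continues to hold when $n-1-k$ becomes negative, but this is not needed for the stated corollary).
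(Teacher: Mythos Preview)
Your argument is correct. The paper itself offers no explicit proof beyond the phrase ``From Corollary~3.3,'' so your induction on $k$---using $\pi_{m,n}=mF_{n-1}+2F_{n+2}-2$ and the identity $F_{j+1}-F_j=F_{j-1}$ to see that $\Delta$ kills the constant $-2$ and shifts each Fibonacci index down by one---is exactly the intended way to fill in the details, and your remark on the range of validity is on point.
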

\begin{proposition}
From (3.11), it can be proved that the $k$th difference sequence for $\pi_{m,n}$ satisfies the recurrence relation (1.3)
\begin{equation}
    \Delta^k (\pi_{m,n})=\Delta^k (\pi_{m,n-1})+\Delta^k (\pi_{m,n-2})
\end{equation}

If $\ 1\leq k\leq n-1$, the $n$th term of the $k$th difference sequence is related with an element of $(\pi_{m,n})$ by:
\begin{equation}
    \Delta^k(\pi_{m,n})=\pi_{m,n-k}+2
\end{equation}
\end{proposition}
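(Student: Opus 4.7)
The plan is to read off both assertions directly from the closed form of $\Delta^k(\pi_{m,n})$ given in Corollary 3.5, together with Corollary 3.3 (equation (3.8)), using nothing beyond the Fibonacci recurrence $F_j = F_{j-1}+F_{j-2}$.

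For the recurrence (3.12), I would start from (3.11), $\Delta^k(\pi_{m,n}) = mF_{n-1-k} + 2F_{n+2-k}$, and observe that each summand is, as a function of $n$, a constant multiple of a shifted Fibonacci term and therefore obeys $a_n = a_{n-1}+a_{n-2}$. Concretely, grouping
\[m(F_{n-2-k}+F_{n-3-k}) + 2(F_{n+1-k}+F_{n-k}) = mF_{n-1-k}+2F_{n+2-k}\]
and invoking (3.11) three times gives $\Delta^k(\pi_{m,n}) = \Delta^k(\pi_{m,n-1}) + \Delta^k(\pi_{m,n-2})$.

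For (3.13), I would compare (3.11) with Corollary 3.3 applied at the shifted index $n-k$:
\[\pi_{m,n-k} = mF_{n-k-1} + 2F_{n-k+2} - 2.\]
The two right-hand sides coincide except for the additive constant $-2$, so $\Delta^k(\pi_{m,n}) - \pi_{m,n-k} = 2$. The hypothesis $1\le k\le n-1$ is exactly what is needed to place $n-k$ in $\{1,\ldots,n-1\}$, the region where (3.8) is valid. The only real obstacle is index-bookkeeping at the boundary: I must check that the Fibonacci indices $n-1-k$ and $n+2-k$ appearing in (3.11) lie in a range where the standard recurrence applies (using the extension $F_{-j}=(-1)^{j+1}F_j$ if necessary), but this is a routine verification that reduces the proposition to two one-line Fibonacci identities.
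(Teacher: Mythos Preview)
Your proposal is correct and follows exactly the route the paper intends: the proposition is stated without proof, with the explicit hint ``From (3.11), it can be proved that\ldots'', and your argument does precisely this---applying the Fibonacci recurrence termwise to (3.11) for (3.12), and matching (3.11) against (3.8) evaluated at $n-k$ for (3.13). One small correction: (3.8) is valid for all indices $\ge 0$, not just $\{1,\dots,n-1\}$; the hypothesis $1\le k\le n-1$ guarantees $n-k\ge 1\ge 0$, which is what you need, but your phrasing of the valid range of (3.8) is slightly off.
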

\begin{proposition}
    Some relations between the $k$th difference sequences and other sequences:
    \begin{equation*}
        \begin{split}
            \Delta^k(\pi_{1,n})&=L_{n+2-k}\\
            \Delta^k(\pi_{2,n})&=4F_{n-k}\\
            \Delta^k(\pi_{6,n})&=4L_{n-k}\\
        \end{split}
    \end{equation*}
\end{proposition}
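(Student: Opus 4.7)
The plan is to deduce all three identities directly from the closed form already established in Corollary 3.4, namely
\[
\Delta^k(\pi_{m,n}) = mF_{n-1-k} + 2F_{n+2-k},
\]
rather than going back to the difference operator's definition. Setting $j = n-k$ to lighten notation, the task collapses to verifying three elementary Fibonacci/Lucas identities, one for each of $m=1,2,6$.

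First I would homogenise the right-hand side by applying the Fibonacci recurrence twice, $F_{j+2} = F_{j+1} + F_j$ followed by $F_{j+1} = F_j + F_{j-1}$, to rewrite $2F_{j+2}$ as $4F_j + 2F_{j-1}$. This yields the compact master identity
\[
\Delta^k(\pi_{m,n}) = (m+2)\,F_{j-1} + 4\,F_j,
\]
a linear combination of two consecutive Fibonacci numbers whose coefficients depend only on $m$. From this single equation each of the three cases follows by a single substitution.

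For $m=1$, the expression becomes $3F_{j-1}+4F_j$, and I would match this against $L_{j+2}$ by expanding $L_{j+2}=F_{j+1}+F_{j+3}$ twice with the Fibonacci recurrence, which produces exactly $3F_{j-1}+4F_j$. For $m=2$, the coefficients collapse to $4F_{j-1}+4F_j = 4F_{j+1}$, which is a Fibonacci number by one further application of the recurrence. For $m=6$, the expression becomes $8F_{j-1}+4F_j$, and invoking the standard identity $L_j = F_{j-1}+F_{j+1} = 2F_{j-1}+F_j$ gives $4L_j = 8F_{j-1}+4F_j$, completing the match.

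The proof is entirely mechanical and the only real obstacle is keeping the indices consistent. In particular, the $m=2$ manipulation naturally produces $4F_{n-k+1}$ rather than $4F_{n-k}$ as written; I would therefore double-check the indexing convention used in the statement and, if need be, record this as a one-step index shift so the three formulas all emerge cleanly from the master identity above.
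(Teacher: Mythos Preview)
The paper states Proposition~2 without proof, so there is no original argument to compare against. Your approach --- plugging $m=1,2,6$ into the closed form $\Delta^k(\pi_{m,n}) = mF_{n-1-k} + 2F_{n+2-k}$ from Corollary~3.4 and reducing via standard Fibonacci/Lucas identities --- is exactly the natural one, and the derivation of the master identity $\Delta^k(\pi_{m,n}) = (m+2)F_{j-1} + 4F_j$ with $j=n-k$ is clean and correct. The $m=1$ and $m=6$ cases go through as you describe.

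Your caution about the $m=2$ case is well founded and is not a defect in your argument: it is a typo in the paper's statement. From the master identity one gets $4F_{j-1}+4F_j = 4F_{j+1} = 4F_{n-k+1}$, not $4F_{n-k}$. A direct check confirms this: $\Delta^1(\pi_{2,1}) = \pi_{2,2}-\pi_{2,1} = 6-2 = 4$, which equals $4F_1$ but not $4F_0=0$. So you should not try to force the index to match the printed formula; instead record the corrected identity $\Delta^k(\pi_{2,n}) = 4F_{n-k+1}$ and note the misprint.
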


\subsection{A different u-recursive sequence}
Let's consider the finite sequence $(q_j)^3_{j=-5}$
\begin{equation*}
    (q_j)=(\boldsymbol{-2},-2,-2,-2,-2,1,2,5,9)
\end{equation*}
This sequence is a sub-sequence of $(\pi_{1,k})$, all of its elements (except for $q_{-5}$, the minus two written in bold) are produced by their predecessor by the formula (2.3)
\begin{equation*}
    q_{p+1}=|q_p|+\sum^{|q_p|-1}_{i=0}q_{p-i\sign{q_p}}
\end{equation*}
It's possible to expand the sequence by generating the next element $q_4$ using the previous formula
\begin{equation*}
    q_4=9+\sum^8_{i=0}q_{3-i}=9+9-2+\sum^2_{i=-4}q_i=16
\end{equation*}
The sequence contains exactly the 9 elements the summatory is asking for. Notice that the sum of 7 consecutive elements of the sequence is equal to $0$, this is exactly the requisite for the existence of a negative number in the sequence (Corollary 2.2): $q_p \implies \sum^{p-1-q_p}_{i=p+2}q_i=0$. In this case, $\sum^2_{i=-4}q_i$, so the position is $p=-4-2=-6$ and it's value $q_p=-6-1-2=-9$.
This allow us to expand the sequence, by adding another element on the left.
\begin{equation*}
    (q_j)=(-9,-2,-2,-2,-2,-2,1,2,5,9,16)
\end{equation*}
We are able to add as many $-2$ to the left as we want without violating (2.2), so there's enough elements in the sequence to generate the next term to the right. Furthermore, we can also try to find the similar conditions that allowed $-9$ to exist in the first place: $R_n=S_{n^{\prime}}$.

After repeating this procedure, we get the following ultra-recursive sequence:
\begin{equation*}
        (\ldots,-86_{_{-77}}|-42_{_{-35}}|-20_{_{-15}}|-9_{_{-6}}|-2_{_{-1}},1,2,5,9,16,20,38,42,82,86,\ldots)
\end{equation*}
where the sub-indices allude to the position in which the value is located. The omitted values in between are $-2$.

It is not a coincidence that the magnitude of all the negative values in the sequence also appears as positive. The following theorem explains that and also the fact that $q_{-n}\neq-2 \implies q_{q_{-n}-n}=-2q_{-n}-2$.

\begin{theorem}
For $m>0$, there exists an ultrarecursive sequence $(\pi^*_k)$ with $\pi^*_n=\pi_{m,n}$ for $-m-4\leq n \leq 3$; $\pi^*_{2n}=2(\pi^*_{2n-1}-1)$, $\pi^*_{2n+1}=2(\pi^*_{2n-1}+1)$ for $n>1$; and for $t<0$, if $t=-(\pi^*_{2n-1}-2n+1)$ for some $n>1$, it implies $\pi^*_t=-\pi^*_{2n-1}$, otherwise $\pi^*_t=-2$.
\begin{equation*}
    (...,|-(2m+18)_{_{-(2m+13)}}|-(m+8)_{_{-(m+5)}}|-2_{_{-1}},m,2,m+4,m+8,...)
\end{equation*}

\end{theorem}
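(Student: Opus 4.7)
The plan is to define $(\pi^*_k)$ directly by the rules in the statement and then check Definition~2.1 position by position. Writing $a_n := \pi^*_{2n-1}$, the prescribed doubling laws become $a_{n+1} = 2a_n + 2$ (with $a_2 = m+8$) and $\pi^*_{2n} = 2a_n - 2$ for $n \geq 2$. I would partition the integers into four regimes: positions where $\pi^*_p = -2$; the initial window $-m-4 \leq p \leq 3$ on which $\pi^*$ agrees with $\pi_{m,\cdot}$; the sparse negative positions $p_n := 2n-1-a_n$ carrying $\pi^*_{p_n} = -a_n$ (for $n \geq 2$); and the positive positions $p \geq 3$ on which the doubling laws take over.

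The first two regimes are handled immediately: where $\pi^*_p = -2$, Case~2 of the recursion collapses to the tautology $\pi^*_{p+1} = \pi^*_{p+1}$, imposing no constraint; on the initial window the ultra-recursion is inherited from $(\pi_{m,k})$. In the third regime, Corollary~2.2 demands
\begin{equation*}
N_n \; := \; \sum_{i=2n+1-a_n}^{2n-2} \pi^*_i \; = \; 0 \qquad (n \geq 2),
\end{equation*}
while in the fourth regime equation~(2.5) applied at $p = 2n-1$ and $p = 2n$ forces
\begin{equation*}
M_n \; := \; \sum_{i=2n-a_n}^{2n-2} \pi^*_i \; = \; -2, \qquad K_n \; := \; \sum_{i=2n+3-2a_n}^{2n-1} \pi^*_i \; = \; 6 - 2a_n,
\end{equation*}
which is exactly what produces the prescribed values $\pi^*_{2n} = 2a_n - 2$ and $\pi^*_{2n+1} = 2a_n + 2$.

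I would prove $N_n = 0$, $M_n = -2$, and $K_n = 6 - 2a_n$ simultaneously by induction on $n \geq 2$. The base case reduces to arithmetic on the initial window together with the single new entry $-(m+8)$ at position $-(m+5)$. For the inductive step, computing $N_{n+1} - N_n$ amounts to analysing the length-$a_n$ interval that the enlarged sum adds at its bottom: this interval contains exactly one special position, of value $-a_n$, together with $a_n - 1$ copies of $-2$, contributing $-3a_n + 2$; this precisely cancels the two new summands $\pi^*_{2n-1} + \pi^*_{2n} = 3a_n - 2$ introduced at the top, yielding $N_{n+1} = N_n = 0$. The identities $M_{n+1} = N_{n+1} + \pi^*_{2(n+1) - a_{n+1}} = -2$ and $K_{n+1} = 6 - 2a_{n+1}$ then fall out of the same ``one special entry, rest $-2$'' count.

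The principal obstacle will be the combinatorial bookkeeping justifying this counting. Since $a_n = (m+10)2^{n-2} - 2$ grows exponentially while the indices $p_k$ are spaced only linearly in $k$, one must verify that the $p_k$ are pairwise distinct and that none of the auxiliary check-points (such as $2n - a_n$ or $2(n+1) - a_{n+1}$) coincides with any $p_k$. These reduce to short elementary inequalities comparing $(m+10)(2^{n-2} - 2^{k-2})$ with linear expressions in $n - k$, and are readily verified for all $m > 0$; once they are in place, the arithmetic of the induction is essentially routine.
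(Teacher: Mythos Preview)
Your proposal is correct and follows essentially the same route as the paper: both arguments are an induction that propagates a zero-sum identity (your $N_n=0$ is exactly the paper's hypothesis $\sum_{i=-(\pi^*_n-n-2)}^{\,n-1}\pi^*_i=0$ at odd $n$), using it via Corollary~2.2 to legitimise the negative spike and via (2.5) to produce the next two positive terms, after which the same zero-sum identity re-emerges one step up. Your regime partition and the named sums $M_n,K_n$ make the bookkeeping (in particular the ``only one special entry per new block'' check) more explicit than the paper's rather terse treatment, but the underlying mechanism is identical.
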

\begin{proof}
Let's suppose that the sum of the $\pi^*_n-2$ predecessors of $\pi^*_n$ is zero:
\begin{equation*}
    \sum^{n-1}_{i=-(\pi^*_n-n-2)}\pi^*_i=0\\
\end{equation*}
This allows, according to Corollary 2.2, the existence of the element $-(\pi^*_n-n-2)-(n-1)-3=-\pi^*_n$ in the position $\pi^*_{-(\pi^*_n-n)}$; we also know that $\pi^*_{-(\pi^*_n-n-1)}=-2$ is allowed.
The next element in the sequence will be:
\begin{equation*}
    \pi^*_{n+1}= 2\pi^*_{n}+\sum^{n-1}_{i=n+1-\pi^*_n}\pi^*_i=2\pi^*_n+\pi^*_{-(\pi^*_n-n-1)}+\sum^{n-1}_{i=-(\pi^*_n-n-2)}\pi^*_i=2\pi^*_n-2
\end{equation*}
Since $\pi^*_{-(\pi^*_n-n)}+\pi^*_{-(\pi^*_n-n-1)}+\pi^*_n+\pi^*_{n+1}=2\pi^*_n-4$, it's clear that
\begin{equation*}
    \sum^{n+1}_{i=-(2\pi^*_n-n-2)}\pi^*_i=\sum^{n+1}_{i=-(\pi^*_{n+1}-n)}=0
\end{equation*}
if the $\pi^*_n-2$ predecessors of $\pi^*_{-(\pi^*_n-n)}$ have value $-2$. Therefore, the next element in the sequence:
\begin{equation*}
\begin{split}
    \pi^*_{n+2}&=\pi^*_{n+1}+\sum^{n+1}_{i=n+2-\pi^*_{n+1}}\pi^*_i=\pi^*_{n+1}+\sum^{n+1}_{i=-(\pi^*_{n+1}-n-2)}\pi^*_i\\
    &=\pi^*_{n+1}-\pi^*_{-(\pi^*_{n+1}-n)}-\pi^*_{-(\pi^*_{n+1}-n-1)}+\sum^{n+1}_{i=-(\pi^*_{n+1}-n)}\\
    &=\pi^*_{n+1}+4
\end{split}
\end{equation*}

Now, let us notice that the sum of the $\pi^*_{n+1}+2=\pi^*_{n+2}-2$ predecessors of $\pi^*_{n+2}$ is zero. This is the same condition that started the proof, which allow us to demonstrate by induction that this behavior will remain for the successors of $n$. It is easy to proof that for $n=3$
\begin{equation*}
    \sum^{n-1}_{i=n-(\pi_{m,n}-2)}\pi_{m,i}=0
\end{equation*}
\end{proof}

We have found an infinite number of ultra-rrecursive sequences that are not periodic nor partially periodic.
\begin{definition}
    The sequences of sequences $\boldsymbol{\Pi^*}$ has elements $(\pi^*_{m,k})$ that are eigen-sequences of the transformation $\textsc{O}$.
    \begin{equation*}
    \begin{split}
        \boldsymbol{\Pi^*} \equiv ((\pi^*_{m,k})_{k\in\mathbb{Z}})_{m\in\mathbb{Z}^+} \colon  \quad & \pi^*_{m,n}=\pi_{m,n} \ \ \forall \ \ 0\leq n\leq3 \\
        &\pi^*_{m,2n}=2\pi^*_{2n-1} -2\ \ \forall \ \ n>2 \\
        &\pi^*_{m,2n+1}=\pi^*_{m,2n}+4\ \ \forall \ \ n>1 \\
        & r\neq \pi^*_n-n \implies \pi^*_{-r}=-2\ \ \forall \ \ n>2 \\
        & r=\pi^*_n-n \implies \pi^*_{-r}=-r-n\ \ \forall \ \ n>2 \\
    \end{split}
    \end{equation*}
\end{definition}
\begin{theorem}
   For $(\pi^*_{m,n})$, there is the following solution
   \begin{equation}
       \pi^*_{m,2n}=2^{n-1}(m+10)-6
   \end{equation}
\end{theorem}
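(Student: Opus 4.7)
The plan is to show that the even-indexed subsequence $\bigl(\pi^*_{m,2n}\bigr)_{n\ge 1}$ satisfies a first-order affine recurrence, and then solve it.

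First I would derive a recurrence purely among even-indexed terms by eliminating the intermediate odd term from the two defining rules of Theorem 3.6. From $\pi^*_{m,2n} = 2(\pi^*_{m,2n-1} - 1)$ one solves $\pi^*_{m,2n-1} = \tfrac{1}{2}\pi^*_{m,2n} + 1$; substituting this into $\pi^*_{m,2n+1} = 2(\pi^*_{m,2n-1} + 1)$ yields the simple link $\pi^*_{m,2n+1} = \pi^*_{m,2n} + 4$. Feeding this in turn into $\pi^*_{m,2(n+1)} = 2(\pi^*_{m,2n+1} - 1)$ produces the single-step even-to-even recurrence
\[
    \pi^*_{m,2(n+1)} \;=\; 2\,\pi^*_{m,2n} + 6, \qquad n \ge 1.
\]

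Next I would solve this affine recurrence. Its unique fixed point is $c = -6$ (from $c = 2c + 6$), so setting $a_n := \pi^*_{m,2n} + 6$ gives $a_{n+1} = 2a_n$, i.e.\ $a_n = 2^{n-1} a_1$. For the base case I would use Corollary 3.2 together with the initial data $\pi^*_{m,n} = \pi_{m,n}$ for $0\le n\le 3$, which gives $\pi^*_{m,2} = m+4$; hence $a_1 = m+10$, and therefore
\[
    \pi^*_{m,2n} \;=\; 2^{n-1}(m+10) - 6,
\]
completing the induction.

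I do not expect any real obstacle here: once the even-to-even recurrence is isolated, the rest is a single affine recursion. The only point requiring a moment of care is verifying that the recurrences of Theorem 3.6 are applicable already at $n=2$ (the first step of the induction), which comes down to having $\pi^*_{m,3} = m+8$ available as starting input; this is part of the initial data prescribed by the theorem, so the induction launches without issue.
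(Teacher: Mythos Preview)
Your proposal is correct and follows essentially the same route as the paper: both derive the even-index recurrence $\pi^*_{m,2(n+1)} = 2\,\pi^*_{m,2n} + 6$ from the rules of Theorem~3.6/Definition~3.4 and then solve it with the base value coming from the initial data $\pi^*_{m,n}=\pi_{m,n}$ for $0\le n\le 3$. The only cosmetic difference is that the paper solves the affine recurrence by explicit unrolling down to $\pi^*_{m,4}=2\pi^*_{m,3}-2$ and summing the resulting geometric series, whereas you shift by the fixed point $-6$ to reduce to a pure doubling; both are standard and equivalent.
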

\begin{proof}
By Definition 3.4,
\begin{equation*}
\begin{split}
    \pi^*_{m,2n}&=2\pi^*_{m,2(n-1)}+6\\
    \implies \pi^*_{m,2n}&=2(2\pi^*_{m,2(n-2)}+6)+6\\
    \implies \pi^*_{m,2n}&=2(2(2\pi^*_{m,2(n-3)}+6)+6)+6\\
    \implies \pi^*_{m,2n}&=2^{n-2}\pi^*_{m,4}+6\sum^{n-3}_{i=0}{2^i}=2^{n-2}(2\pi^*_{m,3}-2)+6*(2^{n-2}-1)\\
    &=2^{n-1}(\pi^*_{m,3}+2)-6=2^{n-1}(m+10)-6
\end{split}
\end{equation*}
\end{proof}

\section{Periodic ultra-recursive sequences}
We are now going to examine periodic sequences with elements that satisfy the equation (2.3). 

Let's consider the sequence
\begin{equation*}
    (\dot{u}_n)=(-6,-2,-2,-2,6,-2)
\end{equation*}
Here, $\dot{u}_0=-2$, $\dot{u}_1=-6$ and so on. It isn't hard to prove that there exists an ultra-recursive sequence $(u_k)$ with $u_n=\dot{u}_n\colon \ 0 \leq n \leq 5$ and $u_k=u_{k+6n} \ \forall \ n\in\mathbb{Z}$.

\begin{definition}
    For any sequence $(a_k)$ with period $p$, we call the unitary sequence to the sub-sequence containing the element $a_1$ and its $p-1$ successors:
    \begin{equation*}
        (\dot{a}_n)^p_{n=1}\colon \quad \dot{a}_m=a_m \ \ \forall \ \ 1\leq m \leq p
    \end{equation*}
\end{definition}

\begin{theorem}
    For all $m\geq0$, there exists an ultra-recursive sequence $(\tau_k)$ with period $4m+2$ whose unitary sequence contains exactly $m$ elements with value $-4m-2$, $m$ elements with value $4m+2$ and $2m+2$ elements with value $-2$. If $\tau_k\neq-2$, then $\tau_{k-1}=-2$ and $\tau_{k+1}=-2$.
\end{theorem}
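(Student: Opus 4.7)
The plan is to construct $(\tau_k)$ explicitly by writing down its unitary block and then verifying equation (2.2) by reducing every required window-sum to the total of one period.

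First, I would set $N=4m+2$ and define the unitary block $(\dot\tau_n)_{n=1}^{N}$ by placing $-N$ at the odd positions $1,3,\ldots,2m-1$, placing $+N$ at the odd positions $2m+3,2m+5,\ldots,4m+1$, and assigning $-2$ to every remaining position (all even positions together with the single odd position $2m+1$). The claimed counts of $m$ copies of $-N$, $m$ copies of $N$ and $2m+2$ copies of $-2$ then follow immediately, and the isolation property is automatic, because each non-$-2$ entry lies at an odd position whose two cyclic neighbours are either at even positions or at positions $2m+1$ and $4m+2$, all of which carry $-2$ by construction. Summing one full period yields
\begin{equation*}
T\;:=\;\sum_{n=1}^{N}\dot\tau_n\;=\;m(-N)+mN+(2m+2)(-2)\;=\;-(N+2).
\end{equation*}

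Second, I would extend $(\dot\tau_n)$ periodically to $(\tau_k)_{k\in\mathbb{Z}}$ and verify Definition 2.1 using the case analysis of Section 2.1. When $\tau_p=-2$, Corollary 2.5 makes (2.2) tautological. When $\tau_p=-N$, Corollary 2.2 reduces (2.2) to the requirement $\sum_{i=p+2}^{p-1+N}\tau_i=0$; this window has length $N-2$ and is precisely one full period with the two adjacent positions $p$ and $p+1$ removed, so by periodicity and the (already established) isolation fact $\tau_{p+1}=-2$ its sum equals $T-\tau_p-\tau_{p+1}=-(N+2)+N+2=0$. When $\tau_p=+N$, equation (2.5) demands $\tau_{p+1}=2N+\sum_{i=p+1-N}^{p-1}\tau_i$; this window has length $N-1$ and is one full period with only position $p$ removed, so its sum equals $T-\tau_p=-(2N+2)$, and therefore $\tau_{p+1}=2N-(2N+2)=-2$, in agreement with the isolation claim. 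The boundary case $m=0$ is degenerate: the construction becomes the constant sequence $\tau_k\equiv-2$, which is evidently ultra-recursive.

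I do not anticipate a serious obstacle beyond careful bookkeeping. The one observation that makes the argument go through cleanly is that the two windows appearing in (2.2) and (2.5) for non-$-2$ entries have lengths $N-2$ and $N-1$; hence each of them is a full period minus one or two identifiable endpoint entries, and the window sum may be read off from $T=-(N+2)$ without any position-by-position casework on the index $p$ of the starting element.
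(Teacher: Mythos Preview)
Your argument is correct: the period total $T=-(N+2)$ together with Corollaries 2.2 and 2.5 and equation (2.5) does verify Definition 2.1 at every index, and your explicit placement of the $\pm N$ entries at alternating odd positions indeed gives the isolation property.

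The paper's proof uses the same ingredient---the period sum $S_m=-4m-4$---but exploits it more directly. Rather than splitting into windows of length $N-2$ and $N-1$ and patching in the removed endpoints, it observes that for any $\tau_p\ne-2$ one has $|\tau_p|=4m+2=N$, so the sum appearing in equation (2.3) runs over exactly one full period regardless of the sign of $\tau_p$; hence $\tau_{p+1}=(4m+2)+S_m=-2$ in a single line for both the positive and the negative case, and the isolation condition drops out as a consequence rather than being fed back in as a hypothesis. A second small difference is that the paper does not fix one particular arrangement: its argument shows that \emph{every} periodic sequence with the prescribed multiset of values and the isolation constraint is ultra-recursive, which is what the subsequent Definition 4.2 exploits when it parametrises over all admissible position sets $\textsc{P},\textsc{N}$. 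Your construction exhibits one such sequence, which suffices for the existence claim but is slightly less than what the paper actually proves.
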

\begin{proof}
    Since, $(\tau_k)$ is periodic, the sum of any $4m+2$ consecutive elements is equal to
    \begin{equation*}
    \begin{split}
        S_m &\equiv\sum^{(4m+2)+\alpha-1}_{i=\alpha} \tau_{i}=\sum^{4m+2}_{i=1}\dot{\tau}_{i}\\
        &=m(-4m-2)+m(4m+2)+(2m+2)(-2)=-4m-4
    \end{split}
    \end{equation*}
    If $\tau_k\neq-2$, then $\tau_k=\pm(4m+2)\implies |\pm(4m+2)|=4m+2$ and by (2.3)
    \begin{equation*}
        \tau_{k+1}=(4m+2)+\sum^{4m+1}_{i=0}\tau_{\mp i}=(4m+2)+S_m=-2
    \end{equation*}
    Thus, $\tau_k\neq-2\implies \tau_{k+1}=-2$. By Corollary 2.5, we know that $\tau_k=-2\implies \tau_{k+1}=\tau_{k+1}$, so we know every element generates its successor according to equation (2.2) independently of the position of the elements $\tau_k\neq-2$ in the sequence.
\end{proof}
\begin{definition}
    The sequence of sequences $\boldsymbol{T}$ has elements $(\tau^{\textsc{P},\textsc{N}}_{m,k})$ that have period $4m+2$ and are eigen-sequences of the transformation $\textsc{O}$.
    \begin{equation*}
    \begin{split}
        \boldsymbol{T}\equiv((\tau^{\textsc{p},\textsc{n}}_{m,k})_{k\in\mathbb{Z}})_{m\in\mathbb{Z}^+}\colon  \quad & |\textsc{P}|=|\textsc{N}|=m, \ \textsc{P}\cap \textsc{N}=\varnothing, \\
        & \textsc{Q}\equiv\textsc{P}\cup \textsc{N}\implies \textsc{Q} \subset\{1,2,...,4m+2\}\\
        &\forall q_i,q_j\in\textsc{Q}\colon \ q_i\neq q_j+1 \ \bmod{(4m+2)}\\
        &\forall q\notin \textsc{Q}, \dot{\tau}^{\textsc{p},\textsc{n}}_{m,q}=-2\\
        &\forall p\in\textsc{P}, \dot{\tau}^{\textsc{p},\textsc{n}}_{m,p}=4m+2\\  
        &\forall n\in\textsc{N},\dot{\tau}^{\textsc{p},\textsc{n}}_{m,n}=-(4m+2)\\
    \end{split}
    \end{equation*}
    There are several sets \textsc{P} and \textsc{N} that satisfy the requirements, all those are contained in $\boldsymbol{T}$ even though some of them are redundant because they are ``the same sequence with different subindexes''.
\end{definition}

In section 3, we studied the sequence $\boldsymbol{\Pi}$, which is periodic on the left side: the period is $1$ and its unitary sequence is the element $-2$. It is possible to \textit{construct} more sequences of this nature with the periodic ultra-recursive sequences we just found.
\begin{definition}
    For any sequence $(a_k)$ with period $p$, we denote as $(\breve{a}_z)^{\beta}_{z=-\infty}$ to the infinite subsequence of $(a_k)$ whose last element is $\breve{a}_\beta=a_0=a_p$. Here, $\beta$ can be any number, depending on the context.
    \begin{equation*}
        (\breve{a}_z)^\beta_{z=-\infty}\colon \ \breve{a}_{\beta-n}=a_{-n} \ \forall \ n\in\mathbb{N}
    \end{equation*}
\end{definition}

Let's consider the sequence $(\breve{\tau}^{5,1}_{1,z})$:
\begin{equation*}
    (\breve{\tau}^{5,1}_{1,z})=(...,-2,-2,6,-2,-6,-2,-2,-2,6,-2)
\end{equation*}
Notice that it is partially an ultrarrecursive sequence because every term generates the next by (2.3) except for the last element who does not have a successor. Since this element is $-2$, we can \textit{propose} any positive value as we did with $\boldsymbol{\Pi}$. The calculation of the first elements of first sequences is showed below:
\setcounter{MaxMatrixCols}{20}
\begin{equation*}
    \begin{pmatrix}
    (\breve{\tau}^{5,1}_{1,z})& 1 & 2 & 5 & 17 & 24 & 47 &93&174&321& \ldots\\
    (\breve{\tau}^{5,1}_{1,z})& 2 & 2 & 6 & 18 & 34 & 62 &118&218&398& \ldots\\
    (\breve{\tau}^{5,1}_{1,z})& 3 & 10 & 19 & 35 & 60 & 113  &215&398&731& \ldots\\
    (\breve{\tau}^{5,1}_{1,z})& 4 & 10 & 20 & 36 & 70 & 128 &240&442&820& \ldots\\
    (\breve{\tau}^{5,1}_{1,z})& 5 & 10 & 21 & 33 & 68 & 127 &229&426&793& \ldots\\
    (\breve{\tau}^{5,1}_{1,z})& 6 & 10 & 22 & 34 & 66 & 122 &234&430&798& \ldots\\
    \vdots & \vdots &\vdots & \vdots & \vdots & \vdots & \vdots & \vdots & \vdots & \vdots &\ddots
    \end{pmatrix}
\end{equation*}
The first thing that catches the eye is that, not like in $\boldsymbol{\Pi}$ nor $\boldsymbol{\Pi}^*$, in some cases, the properties $\tau_{m,n}<\tau_{m+1,n}$ and $\Delta{(\tau_{m,n})}<\Delta{(\tau_{m,n+1})}$ are not satisfied.\\
As we shall see later, the chaotic behavior of this sequence of sequences can represent an application in cryptography.
Before we aim to give an approximation of the $n$-th element of such a sequence, it is necessary to introduce the following theorem.
\begin{theorem}
   For any sequence $(a_k)$ with two consecutive elements that satisfy $0<a_n<a_{n+1}$ it's true that 
   \begin{equation}
   \begin{split}
      &\sum^{n-a_n}_{i=n+2-a_{n+1}}a_i=(a_{n+1}-a_n-1)(-2)+R\\
   \end{split}
   \end{equation}
   for some $R$, which implies $\sum^{n-a_n}_{i=n+2-a_{n+1}}(a_i+2)=R$. Therefore, the sequence $(a^\prime_k)\equiv\textsc{O}\circ(a_k)$, has the element $a^\prime_{n+2}$:
   \begin{equation}
       a^\prime_{n+2}=a^\prime_{n+1}+a_n+R+2
   \end{equation}
\end{theorem}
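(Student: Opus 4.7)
The plan is to unpack $a'_{n+1}$ and $a'_{n+2}$ via the definition of $\textsc{O}$ and then identify the overlap between the two resulting sums; the identity will drop out of careful index bookkeeping.

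First I apply equation (2.6), which expresses the $\textsc{O}$-image in the Case~1 (positive) regime, at both $p=n$ and $p=n+1$. Since the hypothesis $0<a_n<a_{n+1}$ makes both $a_n$ and $a_{n+1}$ positive, this yields
\begin{equation*}
    a'_{n+1} = 2 + \sum_{i=n+1-a_n}^{n-1}(a_i+2), \qquad a'_{n+2} = 2 + \sum_{i=n+2-a_{n+1}}^{n}(a_i+2).
\end{equation*}
The strict inequality $a_{n+1}>a_n$ then forces $n+2-a_{n+1}\le n+1-a_n$, so the summation range for $a'_{n+2}$ extends at least one index further to the left than that for $a'_{n+1}$. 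This invites splitting the $a'_{n+2}$ sum into an ``extra'' block from $n+2-a_{n+1}$ up to $n-a_n$ (which contains exactly $a_{n+1}-a_n-1$ summands) plus a tail from $n+1-a_n$ to $n$.

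At that point the first identity of the theorem is nothing more than a rewriting of the definition of $R$: because the extra block has $a_{n+1}-a_n-1$ terms, $\sum_{i=n+2-a_{n+1}}^{n-a_n}(a_i+2)=R$ is by simple arithmetic equivalent to $\sum_{i=n+2-a_{n+1}}^{n-a_n}a_i=-2(a_{n+1}-a_n-1)+R$. The tail of $a'_{n+2}$, meanwhile, reproduces the entire $a'_{n+1}$-sum verbatim, supplemented only by the single additional contribution $(a_n+2)$ from the new upper end $i=n$. Reassembling the three pieces yields $a'_{n+2}=a'_{n+1}+a_n+R+2$, which is the second claim.

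The argument is elementary; the one step worth pausing over is the boundary case $a_{n+1}=a_n+1$, where the extra block is empty, so $R=0$ and the identity collapses to $a'_{n+2}=a'_{n+1}+a_n+2$. I would verify that special case first as a sanity test before writing up the general split, since misordering the two endpoints of the extra block is the most likely place for an off-by-one slip.
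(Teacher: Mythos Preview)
Your proof is correct and follows essentially the same route as the paper: both apply equation~(2.6) to express $a'_{n+2}$, then split the resulting sum into the top term $(a_n+2)$, the block $\sum_{i=n+1-a_n}^{n-1}(a_i+2)$ that rebuilds $a'_{n+1}$, and the leftover block $\sum_{i=n+2-a_{n+1}}^{n-a_n}(a_i+2)=R$. Your explicit check of the degenerate case $a_{n+1}=a_n+1$ is a nice addition not present in the paper.
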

\begin{proof}
By equation (2.6):
\begin{equation*}
\begin{split}
    a^\prime_{n+2}&=2+ \sum^{n}_{i=n+2-a_{n+1}}(a_i+2)\\
    &=2+a_n+2+\sum^{n-1}_{i=n+1-a_n}(a_i+2)+\sum^{n-a_n}_{i=n+2-a_{n+1}}(a_i+2)\\
    &=2+a_n+a^\prime_{n+1}+R
\end{split}
\end{equation*}
\end{proof}
\begin{corollary}
For any ultrarecursive sequence $(u_k)$, if $0<a_n<a_{n+1}$, it's true that
\begin{equation}
    a_{n+2}=a_{n+1}+a_n+R+2
\end{equation}
with $R=\sum^{n-a_n}_{i=n+2-a_{n+1}}(a_i+2)$.
\end{corollary}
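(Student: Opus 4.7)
The plan is to obtain the statement as an immediate consequence of Theorem 4.2 combined with the defining property of an ultra-recursive sequence. Theorem 4.2 applies to \emph{any} sequence $(a_k)$ with $0 < a_n < a_{n+1}$ and yields the identity
\begin{equation*}
    a^\prime_{n+2} = a^\prime_{n+1} + a_n + R + 2,
\end{equation*}
where $(a^\prime_k) \equiv \textsc{O}\circ (a_k)$ and $R = \sum^{n-a_n}_{i=n+2-a_{n+1}}(a_i+2)$. The corollary essentially asserts what happens when this input sequence is itself a fixed point of $\textsc{O}$.

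Accordingly, the first step is to verify that the hypothesis of Theorem 4.2 is satisfied for the sequence in question, which is immediate from the assumption $0 < a_n < a_{n+1}$. Next, I would invoke Definition 2.1, which says that an ultra-recursive sequence $(u_k)$ satisfies $\textsc{O}\circ(u_k) = (u_k)$, so that the image sequence $(u^\prime_k)$ coincides term-by-term with $(u_k)$. (I would note in passing that the statement as written uses $a_k$ for the sequence's elements; I would take $(a_k) = (u_k)$ throughout so as to stay consistent with the notation of Theorem 4.2.) This means $a^\prime_{n+1} = a_{n+1}$ and $a^\prime_{n+2} = a_{n+2}$.

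Substituting these equalities into the identity supplied by Theorem 4.2 produces exactly
\begin{equation*}
    a_{n+2} = a_{n+1} + a_n + R + 2,
\end{equation*}
with $R$ as defined above, which is the desired conclusion. There is no real obstacle here; the only place one has to be slightly careful is in checking that Theorem 4.2 is applicable \emph{term-wise}, i.e., that the hypothesis $0 < a_n < a_{n+1}$ pertains to the very indices $n, n+1$ for which the conclusion is stated, and that the expression $R$ keeps the same meaning (same bounds, same summand $a_i + 2$). Once this bookkeeping is made explicit, the corollary follows in a single substitution.
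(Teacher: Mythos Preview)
Your proposal is correct and matches the paper's intended approach: the paper states this corollary immediately after Theorem 4.2 with no separate proof, precisely because it follows by combining Theorem 4.2 with Definition 2.1 (i.e., $\textsc{O}\circ(u_k)=(u_k)$ gives $a'_{n+1}=a_{n+1}$ and $a'_{n+2}=a_{n+2}$). Your explicit substitution is exactly what the paper leaves to the reader.
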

\begin{corollary}
    In $(\pi_{m,n})$, every term $\pi_{m,n}$ is greater than $n$. Therefore:
    \begin{equation*}
        \pi_{m,n+2}=\pi_{m,n+2}+\pi_{m,n}+2
    \end{equation*}
\end{corollary}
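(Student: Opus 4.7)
The final corollary packages two claims: first, an inequality $\pi_{m,n} > n$ valid for $n \geq 0$ (the negative-index regime must be outside the intended scope, since $\pi_{m,z} = -2 < z$ when $z \in \{-2,-1\}$); second, a Fibonacci-like recurrence (the displayed equation clearly contains a typo and should read $\pi_{m,n+2} = \pi_{m,n+1} + \pi_{m,n} + 2$) that follows from the inequality by invoking Corollary 4.1. Essentially this corollary recovers Theorem 3.2 through the general ultra-recursive machinery built in Section 4, serving as an internal consistency check.

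For the inequality I plan a straightforward induction on $n \geq 0$. The base cases are immediate from Corollary 3.2: $\pi_{m,0} = m \geq 1 > 0$ and $\pi_{m,1} = 2 > 1$. For the inductive step, assuming $\pi_{m,k} > k$ for $k = n$ and $k = n+1$, I will apply Theorem 3.2 to obtain $\pi_{m,n+2} = \pi_{m,n+1} + \pi_{m,n} + 2 > (n+1) + n + 2 = 2n+3 > n+2$. Alternatively the closed form from Corollary 3.3, $\pi_{m,n} = mF_{n-1} + 2F_{n+2} - 2$, yields the inequality in one line since $2F_{n+2}-2 \geq n$ for $n \geq 0$ and $mF_{n-1} \geq 1$ for $m \geq 1$.

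For the recurrence I invoke Corollary 4.1 with $a_k = \pi_{m,k}$. The hypothesis $0 < a_n < a_{n+1}$ holds for $n \geq 1$ by Theorem 3.2, which shows each successor exceeds its predecessor by $\pi_{m,n-1}+2 > 0$; the borderline case $n = 0$, where monotonicity $\pi_{m,0} < \pi_{m,1}$ can fail for $m \geq 2$, is handled by direct arithmetic from Corollary 3.2 ($\pi_{m,2} = m+4 = \pi_{m,1} + \pi_{m,0} + 2$). Corollary 4.1 then gives $\pi_{m,n+2} = \pi_{m,n+1} + \pi_{m,n} + R + 2$ with $R = \sum_{i = n+2-\pi_{m,n+1}}^{n - \pi_{m,n}}(\pi_{m,i} + 2)$. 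Using Part 1, the summation bounds satisfy $n+2-\pi_{m,n+1} \leq 0$ and $n - \pi_{m,n} \leq -1$, so the index range is either empty or entirely negative. In the latter case Definition 3.1 gives $\pi_{m,i} = -2$, so each summand vanishes. Either way $R = 0$, and the recurrence $\pi_{m,n+2} = \pi_{m,n+1} + \pi_{m,n} + 2$ follows.

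The main obstacle is bookkeeping rather than conceptual: one must propagate two inequalities simultaneously ($\pi_{m,n} > n$ and $\pi_{m,n+1} > n+1$) so that both endpoints of the defining sum for $R$ land in the $-2$-region, and one must separately dispose of the edge case $n = 0$ where strict monotonicity of $(\pi_{m,k})$ is not yet established. Because the identity recovered is exactly Theorem 3.2, the corollary's real content is that the general formula of Corollary 4.1 specializes cleanly to the family $\boldsymbol{\Pi}$, the remainder $R$ being absorbed by the infinite tail of $-2$'s to the left.
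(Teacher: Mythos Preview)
Your proposal is correct and follows essentially the same approach as the paper: invoke Corollary~4.1 and observe that the remainder $R$ vanishes because all indices in the sum lie in the region where $\pi_{m,i}=-2$. The paper's proof is considerably terser---it writes only
\[
R=\sum^{n-\pi_{m,n}}_{i=n+2-\pi_{m,n+1}}(\pi_{m,i}+2)=\sum^{n-\pi_{m,n}}_{i=n+2-\pi_{m,n+1}}(0)=0
\]
and does not separately establish the inequality $\pi_{m,n}>n$, verify the hypothesis $0<a_n<a_{n+1}$ of Corollary~4.1, or treat the boundary case $n=0$; you supply all of these details, which the paper implicitly takes for granted.
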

\begin{proof}
    Since $\pi_{m,k}=-2$ for $k<0$, by Corollary 4.1:
    \begin{equation*}
        R=\sum^{n-\pi_{m,n}}_{i=n+2-\pi_{m,n+1}}(\pi_{m,i}+2)=\sum^{n-\pi_{m,n}}_{i=n+2-\pi_{m,n+1}}(0)=0
    \end{equation*}
\end{proof}
Equation (4.1) can be explained as follows: for any sequence $(a_k)$, it's possible to interpret any element $a_n$ as $-2+r_n$. Therefore, the sum of consecutive elements of the sequence is:
\begin{equation*}
\begin{split}
    \sum^\beta_{i=\alpha}a_i=\sum^\beta_{i=\alpha}(-2+r_i)&=(\beta+1-\alpha)\Biggl{(}-2+\frac{1}{\beta+1-\alpha}\sum^\beta_{i=\alpha}r_i\Biggr{)}\\
    &=(\beta+1-\alpha)(-2+\bar{r})
\end{split}
\end{equation*}
where $\bar{r}$ is the average of that set of consecutive elements and in (4.1) $R=(a_{n+1}-a_n-1)\bar{r}$. In the following theorem, we'll assume that the average of a large set of consecutive elements of a periodic sequence is near to the average of the unitary sequence:
\begin{equation}
    \beta-\alpha\ggg1\implies\bar{r}\equiv\frac{1}{\beta+1-\alpha}\sum^\beta_{i=\alpha}r_i\thickapprox\frac{1}{p}\sum^p_{i=1}\dot{r}_i
\end{equation}

\begin{theorem}
   For every ultrarecursive sequence $(u_k)$ periodic through the left with $(\breve{\tau}^{\textsc{P},\textsc{N}}_{m,z})^\alpha_{z=-\infty}$. Given two elements $0<n-\alpha\lll u_n<u_{n+1}$, the next element of the sequence is:
   \begin{equation}
       u_{n+2}\thickapprox u_{n+1}(\xi_m)+u_n(2-\xi_m)+3-\xi_m
   \end{equation}
   with $\xi_m=2-\frac{1}{2m+1}$. Therefore, the approximate solution for any $u_{n+r}$ would be:
   \begin{equation}
       u_{n+r}=\kappa^+_{m,n} \phi^r_m + \kappa^-_{m,n} (\xi_m-\phi_m)^r
   \end{equation}
   where $\phi_m=\frac{1}{2}\bigl{(}\xi_m+\sqrt{(\xi_m-2)^2+4}\bigr{)}$ and the $\kappa$ constants:
   \begin{equation*}
    \begin{pmatrix}
    \kappa^+_{m,n}\\\kappa^-_{m,n}
    \end{pmatrix}=\frac{1}{\sqrt{(\xi_m-2)^2+4}}\begin{pmatrix}
    (\xi_m-\phi_m)u_n-u_{n+1}\\u_{n+1}-\phi_m u_n
    \end{pmatrix}
\end{equation*}
\end{theorem}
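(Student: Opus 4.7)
The plan is to reduce the problem to a linear recurrence in $u_n$ and $u_{n+1}$ by invoking Corollary 4.1, then estimate the resulting error term $R$ via the average value of the prescribed periodic tail, and finally solve the second-order linear recurrence by the characteristic-equation machinery of the Introduction.

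First I would apply Corollary 4.1: since $0<u_n<u_{n+1}$, it reads $u_{n+2}=u_{n+1}+u_n+R+2$ with $R=\sum_{i=n+2-u_{n+1}}^{n-u_n}(u_i+2)$. The hypothesis $n-\alpha\lll u_n<u_{n+1}$ means that virtually all indices $i$ in the summation range fall into the periodic tail $(\breve{\tau}^{\textsc{P},\textsc{N}}_{m,z})^{\alpha}_{z=-\infty}$, so I would replace $R$ by $(u_{n+1}-u_n-1)\bar{r}$, where $\bar{r}$ is the average of $\dot{\tau}^{\textsc{P},\textsc{N}}_{m,i}+2$ over one period, justified by (4.4). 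By Definition 4.2 the unitary sequence contains $m$ entries equal to $4m+2$, $m$ entries equal to $-(4m+2)$, and $2m+2$ entries equal to $-2$; adding $2$ to each and summing yields $m(4m+4)+m(-4m)+(2m+2)(0)=4m$ across $4m+2$ positions, so $\bar{r}=2m/(2m+1)$.

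Next I would verify the arithmetic identities $1+\bar{r}=(4m+1)/(2m+1)=\xi_m$, $1-\bar{r}=1/(2m+1)=2-\xi_m$, and $2-\bar{r}=3-\xi_m$. Substituting these into $u_{n+2}\approx u_{n+1}+u_n+(u_{n+1}-u_n-1)\bar{r}+2$ collapses algebraically to (4.5). I then view (4.5) as a second-order linear recurrence with constant coefficients and constant forcing $3-\xi_m$; since the forcing is bounded while the dominant eigenvalue $\phi_m>1$ produces exponentially growing terms, I would drop the particular-solution constant and analyse the homogeneous equation $x^2=\xi_m x+(2-\xi_m)$. Its roots are precisely $\phi_m$ and its conjugate $\xi_m-\phi_m$ by the quadratic formula, so the general solution takes the form (4.6). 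Matching $r=0,1$ against $u_n, u_{n+1}$ is a $2\times 2$ Vandermonde system solved exactly as in (1.9) of the Introduction, delivering the stated $\kappa^\pm_{m,n}$.

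The main obstacle is the approximation step $R\approx(u_{n+1}-u_n-1)\bar{r}$: replacing a finite sum by (length)$\times$(average) is only heuristic unless the range contains many complete periods, and even then there is a fractional-period boundary error of order $\max_i|\dot{\tau}_{m,i}|\sim 4m+2$ together with a pre-$\alpha$ contribution of order $n-\alpha$. The hypothesis $n-\alpha\lll u_n$ is exactly what absorbs both terms into the symbol $\thickapprox$; any attempt to sharpen the theorem into an equality would demand an explicit error bound controlling how large $u_n$ must be relative to $m$ and $n-\alpha$, which is beyond the scope of the approximate statement being made.
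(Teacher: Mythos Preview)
Your proposal is correct and follows essentially the same route as the paper: compute the mean residue $\bar r=2m/(2m+1)$ over one period of $(\dot\tau^{\textsc{P},\textsc{N}}_{m,n})$, feed it into Corollary~4.1 via the averaging heuristic (4.4), and rewrite the resulting approximate recurrence in terms of $\xi_m=1+\bar r$ to obtain (4.5). The paper's proof in fact stops after establishing (4.5) with the sentence ``The following step is to find the closed-form expression of $(u_k)$ for every $m$'', so your explicit derivation of the roots $\phi_m,\ \xi_m-\phi_m$ and the Vandermonde solve for $\kappa^{\pm}_{m,n}$, together with your discussion of the boundary error in the $R\approx(u_{n+1}-u_n-1)\bar r$ step, actually fills in details the paper leaves implicit.
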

\begin{proof}
First, we find the sum of all the residues $r_n$ in $(\dot{\tau}^{\textsc{P},\textsc{N}}_{m,n})$:
\begin{equation*}
\begin{split}
    \sum_{p\in\textsc{P}}\dot{\tau}^{\textsc{P},\textsc{N}}_{m,p}+\sum_{n\in\textsc{N}}\dot{\tau}^{\textsc{P},\textsc{N}}_{m,n}+\sum_{q\notin\textsc{Q}}\dot{\tau}^{\textsc{P},\textsc{N}}_{m,q}=\sum_{p\in\textsc{P}}(4m+2)+\sum_{n\in\textsc{N}}(-4m-2)+\sum_{q\notin\textsc{Q}}(-2)\\
    =\sum_{p\in\textsc{P}}(-2+r_p)+\sum_{n\in\textsc{N}}(-2+r_n)+\sum_{q\notin\textsc{Q}}(-2+r_q)=-4m-4\\
    \implies\sum{r_i}=(-4m-4)+(2m)+(2m)+(4m+4)=4m \quad \quad\\
\end{split}
\end{equation*}
Therefore, the average $\bar{r}_m$ for the $4m+2$ elements of the unitary sequence is $\bar{r}_m=\frac{2m}{2m+1}$. By equation (4.3):
\begin{equation*}
    u_{n+2}\thickapprox u_{n+1}+u_n+(u_{n+1}-u_n-1)\Biggl{(}\frac{2m}{2m+1}\Biggr{)}+2
\end{equation*}
This is (4.5) if $\xi_m\equiv 1+\bar{r}_m=1+\frac{2m}{2m+1}$. The following step is to find the closed-form expression of $(u_k)$ for every $m$.
\end{proof}
%To use this to explain previous results, following to 
Lets consider the sequence $(\breve{\tau}^{\textsc{P},\textsc{N}}_{2,k})$ with $\textsc{P}=\{6,9\}$ and $\textsc{N}=\{1,3\}$:
\begin{equation*}
    (...,-10,-2,\textbf{-10},-2,-2,10,-2,-2,10,-2)
\end{equation*}
Again, this is partially an ultrarecursive sequence because almost every term generate the next by the definitions, except for the minus $10$ written in bold text and the last term (which does not have any successor to generate). For $\textbf{-10}$ it is not possible to generate the next term because there are not enough elements in the sequence: two elements to the right are needed and its sum must be $-12$. For these reasons, it seems impossible to expand the sequence with a positive element as in $\boldsymbol{\Pi}$, the unitary sequences of $\boldsymbol{T}$ for $m=2$ are the only u-recursive sequences we know that have two consecutive elements whose sum is $-12$. Of course, we can combine elements $\ldots(\dot{\tau}^{\textsc{P}_i,\textsc{N}_i}_{2,k})(\dot{\tau}^{\textsc{P}_{i+1},\textsc{N}_{i+1}}_{2,k})(\dot{\tau}^{\textsc{P}_{i+2},\textsc{N}_{i+2}}_{2,k})\ldots$ but this is kind of boring, it doesn't worth further analysis and does not give us more valuable information about the properties of the ultra-recursive sequences. Moreover, we need to find those sequences Theorem 4.3 talks about.
%DEFINE RESTRICTIVE SEQUENCES AND THE OTHER TYPE
\begin{definition}
    We say that $(a_n)^{\beta}_{n=\alpha}$ is a \textit{free} ultra-recursive sequence (or just free u-recursive sequence) if it satisfies the following three conditions: 
    \begin{enumerate}
        \item $\alpha\leq n+\sign{a_n}-a_n\leq\beta$ for $\alpha\leq n<\beta$.
        \item Every term $a_n$ for $n<\beta$, generates its successor by equation (2.2).
        \item $a_\beta=-2$.
    \end{enumerate}
\end{definition}

A notable example of a free u-recursive sequence is $(-2_n)^{0}_{n=\alpha}$ for $\alpha\in(-\infty,0]$.\
The sub-indexes here are irrelevant, we just care about the size of the free u-recursive sequence, which can be infinite.

\begin{corollary}
If $(a_n)$ and $(b_n)$ are free u-recursive sequences, then $((a_n),(b_n))$ is also free if $(b_n)$ is not infinite.
\end{corollary}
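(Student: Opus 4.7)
The plan is to define the concatenation $(c_n)_{n=\alpha_a}^{\beta'}$ by placing $(a_n)_{n=\alpha_a}^{\beta_a}$ first and appending a shifted copy of $(b_n)_{n=\alpha_b}^{\beta_b}$ just after it. Concretely, set the shift $\delta=\beta_a+1-\alpha_b$, the new right endpoint $\beta'=\beta_b+\delta$, and
\begin{equation*}
    c_n=\begin{cases}a_n & \alpha_a\leq n\leq\beta_a,\\ b_{n-\delta} & \beta_a+1\leq n\leq\beta'.\end{cases}
\end{equation*}
The hypothesis that $(b_n)$ is not infinite is precisely what guarantees that $\alpha_b$ is a finite integer, so that this shift is well defined; the endpoints $\beta_a$ and $\beta_b$ are automatically finite by condition~3 of Definition~4.5, which forces a rightmost element equal to $-2$. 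What remains is to verify the three defining conditions for $(c_n)$.

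Condition~3 drops out immediately from $c_{\beta'}=b_{\beta_b}=-2$. For condition~1 I would split by position: for $n$ in the $(a_n)$-block, the reach index $n+\sign(c_n)-c_n$ lies in $[\alpha_a,\beta_a]\subseteq[\alpha_a,\beta']$ by freeness of $(a_n)$; for $n$ in the shifted $(b_n)$-block, it lies in $[\alpha_b+\delta,\beta_b+\delta]=[\beta_a+1,\beta']\subseteq[\alpha_a,\beta']$; and at the boundary $n=\beta_a$ one has $c_{\beta_a}=-2$, giving reach index $\beta_a+1$, still in range.

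For condition~2, positions strictly inside each block inherit the recursion from the freeness of $(a_n)$ or $(b_n)$: by condition~1 applied to the free factor, every backward or forward reach stays within that block, so the sum in (2.2) evaluates over positions whose values agree with those of the original free factor. The only new case is the gluing point $n=\beta_a$. Here $c_{\beta_a}=-2$ is forced by freeness of $(a_n)$, and Corollary~2.5 tells us that (2.2) applied at a value of $-2$ collapses to the tautology $c_{\beta_a+1}=c_{\beta_a+1}$, imposing no constraint on the successor; so any choice, in particular $c_{\beta_a+1}=b_{\alpha_b}$, is compatible with the recursion at the boundary.

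The only delicate point---if it deserves to be called one---is this boundary step, and it is essentially handed to us: the fact that every free u-recursive sequence must end in $-2$ is exactly the property that makes the Corollary~2.5 ``attach without constraint'' principle applicable at the seam. Everything else reduces to routine index bookkeeping over the enlarged domain $[\alpha_a,\beta']$.
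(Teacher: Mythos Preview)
The paper states this corollary without proof, so there is nothing to compare against; your argument is correct and is exactly the natural justification the paper's framework points toward. The decisive observation is the one you highlight: condition~3 of Definition~4.5 forces the last element of the left factor to be $-2$, and Corollary~2.5 then makes the seam generate its successor trivially, while condition~1 on each factor keeps the reach of every other index entirely inside its own block. Your reading of ``$(b_n)$ not infinite'' as ``$\alpha_b$ finite'' is also the right one, since $\beta_b$ is automatically finite by condition~3.
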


\begin{theorem}
   Exists an ultrarecursive sequence $(\omega_k)$ whose elements different from $-2$ are $\omega_{(2n+1)}=-\omega_{-2n}=(4n+2) \ \ \forall \ \ n>0$
   \begin{equation*}
       (...,-14,-2,-10,-2,-6,-2,-2,-2,-2,6,-2,10,-2,14,...)
   \end{equation*}
   And every subsequence $(\omega_k)^{2n+2}_{i=-2n}$ is free.
\end{theorem}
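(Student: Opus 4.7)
The plan is to construct $(\omega_k)$ explicitly and then verify that equation (2.2) holds at every index. Set $\omega_{2n+1}=4n+2$ for $n\geq 1$, $\omega_{-2n}=-(4n+2)$ for $n\geq 1$, and $\omega_k=-2$ at every other integer index. By Corollary 2.5, equation (2.2) is vacuously satisfied whenever $\omega_p=-2$, so the real content is confined to the two families of indices where $\omega_p\neq -2$: the positive case $p=2n+1$ with $\omega_p=4n+2$, and the negative case $p=-2n$ with $\omega_p=-(4n+2)$.

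For the positive case I would apply (2.6) at $p=2n+1$: since $\sign\omega_p=+1$, the equation sums $\omega_i+1$ over the window $\{-2n,-2n+1,\dots,2n+1\}$ of $4n+2$ consecutive indices. The key observation is that the nonzero values in this window come in cancelling pairs: for each $k_0=1,\dots,n$, $\omega_{2k_0+1}+\omega_{-2k_0}=(4k_0+2)+(-(4k_0+2))=0$. The remaining $2n+2$ entries all equal $-2$, giving $\sum_{i=-2n}^{2n+1}\omega_i=-4n-4$; adding $4n+2$ for the $+1$ offsets yields $-2$, in agreement with $\omega_{2n+2}=-2$. For the negative case at $p=-2n$ the sign flips but the successor window $\{-2n,-2n+1,\dots,2n+1\}$ is \emph{the same set}, so the identical cancellation produces $\omega_{-2n+1}=-2$; equivalently, Corollary 2.2 applies since $\sum_{i=-2n+2}^{2n+1}\omega_i=0$ by the same pairing.

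For the freeness claim I would check the three clauses of Definition 4.3 with $\alpha=-2n$ and $\beta=2n+2$. Clause (3), $\omega_{2n+2}=-2$, is immediate since $2n+2$ is a positive even integer, hence neither of the form $2n_0+1$ nor of the form $-2n_0$. Clause (2) is exactly the verification above restricted to the window. Clause (1) reduces to a three-case analysis: when $\omega_k=-2$ one has $k+\sign\omega_k-\omega_k=k+1\in[-2n+1,2n+2]$; when $\omega_k=4k_0+2$ at $k=2k_0+1$ it equals $-2k_0\in[-2n,-2]$; and when $\omega_k=-(4k_0+2)$ at $k=-2k_0$ it equals $2k_0+1\in[3,2n+1]$. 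All three outputs lie in $[-2n,2n+2]$. The main, mild obstacle will be to keep the index bookkeeping straight and to verify that both the positive- and negative-case computations reduce to summations over the \emph{same} window $\{-2n,\dots,2n+1\}$; once the pairing $(2k_0+1)\leftrightarrow(-2k_0)$ is set up, the cancellation is automatic and the remaining count of $-2$ entries is routine.
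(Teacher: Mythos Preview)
Your proof is correct, but it proceeds quite differently from the paper's argument. The paper establishes the result by induction on $n$: it starts from the seven-term free sequence $(-6,-2,-2,-2,-2,6,-2)$ and shows that if $(u_m)$ is a free sequence of $4n-1$ terms summing to $-4n-2$, then wrapping it as $(-4n-2,\,-2,\,(u_m),\,4n+2,\,-2)$ yields a free sequence of $4(n+1)-1$ terms summing to $-4(n+1)-2$. The inductive step consists of two short checks, one via Corollary~2.2 and one via equation~(2.5), each reducing to the same vanishing sum $\sum_{i=3}^{4n+2}\dot u^*_i=0$.

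You instead write down $(\omega_k)$ globally and verify (2.2) at every index in one pass, using the pairing $(2k_0+1)\leftrightarrow(-2k_0)$ to make the non-$(-2)$ entries cancel over the window $\{-2n,\dots,2n+1\}$; both the positive and negative cases collapse to the same window sum, which is a clean observation. Your approach is more direct and arguably more transparent computationally, while the paper's inductive construction makes the nested structure of the free subsequences explicit and dovetails with how the sequences are actually used afterward (extending $(\breve\tau^{\textsc P,\textsc N}_{m,k})$ by appending finite free blocks). One small slip: you cite equation~(2.6) but then describe summing $\omega_i+1$, which is the form of~(2.2); the computation itself is unaffected.
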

\begin{proof}
Suppose that for some $4n+2$, there exists a free u-recursive sequence $(u_m)$ of $4n-1$ elements such that $\sum^{4n-1}_{i=1}\dot{u}_i=-4n-2$.\\
For $n=2$, $(-6,-2,-2,-2,-2,6,-2)$ is a free sequence with those properties. The proof is complete by induction if we prove that 
\begin{equation}
    (u^*_m)=(-4n-2,-2,(u_m),4n+2,-2)
\end{equation} 
is also free, and it has $4(n+1)-1$ elements such that $\sum^{4(n+1)-1}_{i=1}\dot{u}^*_i=-4(n+1)-2$. 
First, we prove that $\dot{u}^*_1=-4n-2$ generates $\dot{u}^*_2=-2$ according to Corollary 2.2:
\begin{equation*}
    \sum^{4n+2}_{i=3}\dot{u}^*_i=\sum^{4n-1}_{i=1}\dot{u}_i+(4n+2)=(-4n-2)+(4n+2)=0
\end{equation*}
We can use this same result to prove that $\dot{u}^*_{4n+2}=4n+2$ generates $\dot{u}^*_{4n+3}=-2$ by equation (2.5)
\begin{equation*}
\begin{split}
    \dot{u}^*_{4n+2}+\sum^{4n+2}_{i=1}\dot{u}^*_i&=(4n+2)+\dot{u}^*_1+\dot{u}^*_2+\sum^{4n+2}_{i=3}\dot{u}^*_i\\
    &=(4n+2)+(-4n-2)+(-2)+0=-2
\end{split}
\end{equation*}
\end{proof}
%To give a precise manner which one can use to refer to all those equations referring ultrarecursive sequences.
Note that $(\omega_n)^6_{n=-4}=(-10,-2,-6,-2,-2,-2,-2,6,-2,10,-2)$ has the properties we were looking for before: the sum of its first two elements is $-12$. Now, we can generate a whole new group of sequences with $((\breve{\tau}^{\textsc{P},\textsc{N}}_{2,k}),(\omega_n)^6_{n=-4},m)$ where $\textsc{P}=\{6,9\}$, $\textsc{N}=\{1,3\}$ and $m>0$. For $m=1$, the following sequence takes place: 
\begin{equation*}
    ((\breve{\tau}^{\textsc{P},\textsc{N}}_{2,k}), (\omega_n)^6_{n=-4},1 , 2 , 5 , 21 , 48 , 83 ,169,302,589,1121,2128 ,4075,7753,\ldots)
\end{equation*}

Finally, lets consider the infinite subsequence $(\breve{\tau}^{\textsc{P},\textsc{N}}_{3,k})$ for $\textsc{P}=\{8,11,13\}$ and $\textsc{N}=\{1,3,6\}$:
\begin{equation*}
    (\ldots,-14,-2,\boldsymbol{-14},-2,-2,\boldsymbol{-14},-2,14,-2,-2,14,-2,14,-2)
\end{equation*}
Again, this is not a free u-recursive sequence because the two bold $-14$ can't generate its successors: there are not enough elements in the sequence. It is needed to add to the right a finit free u-recursive sequence $(a_n)$ so that $\dot{a}_1+\dot{a}_2=-16$ and $\sum^5_{i=1}\dot{a}_i=-34$. Fortunately, we can use again a free subsequence of $(\omega_k)$, since the first terms of $(\omega_n)^8_{n=-6}$ are $(-14,-2,-10,-2,-6,\ldots)$. Lets compute the following terms of the sequence if we also add the element $1$.
\begin{equation*}
    ((\breve{\tau}^{\textsc{P},\textsc{N}}_{3,k}),(\omega_n)^8_{n=-6},1,2,5,25,60, 103, 201, 402, 749, 1477, 2852, 5495, 10641,\ldots)
\end{equation*}

\section{Periodic eigen-sequences of the transformation $\textsc{O}^n$}
Several examples of eigen-sequences of the transformation $\textsc{O}$ have been found. In this section we discuss briefly the existence of sequences $(A_k)$ that remain invariant only after applying $r(>1)$ times $\textsc{O}$; the easiest way of start looking for such sequences  is to imagining that $\textsc{O}$ has the same effect in $(A_k)$ as the following transformation:
\begin{equation}
    \textsc{L}\circ(A_k)\equiv(A^\prime_k)\colon \ A^\prime_{p+1}=A_p
\end{equation}
if $(A_k)$ is periodic with period $r$, it is clear that $\textsc{L}^r\circ(A_k)=(A_k)$. Combining (5.1) and (2.3) leads to the following equation for $|A_p|>0$:
\begin{equation}
    A^\prime_{p+1}=A_p=|A_p|+\sum^{|A_p|-1}_{i=0}A_{p-i\sign{A_p}}
\end{equation}
This can be interpreted as if every term generates itself instead of its next term. This is always true for $A_p=0$. Equation (5.2) is true if and only if
\begin{equation}
    |A_p|=-\sum^{|A_p|-1}_{i=1}A_{p-i\sign{A_p}}
\end{equation}

Since this sequences has period $r$, equation (5.3) is equivalent to:
\begin{equation}
    |A_p|=-n\sum^r_{i=1}\dot{A}_i-\sum^{|A_p|-1 \bmod{(r)}}_{i=1}A_{p-i\sign{A_p}}
\end{equation}
for some integer $n$.

\begin{theorem}
Given a sequence $(A_k)$ with period $r$ such that $|A_p|\neq0\implies |A_p|=r+1$ and $\sum^{r}_{i=1}\dot{A}_i=-(r+1)$, it is an eigen-sequence of $\textsc{O}^r$.
\end{theorem}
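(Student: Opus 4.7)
The plan is to show that the transformation $\textsc{O}$, when applied to such a sequence $(A_k)$, acts exactly as the unit left-shift $\textsc{L}$ defined in (5.1); once that is in hand, periodicity immediately yields $\textsc{O}^r\circ(A_k)=\textsc{L}^r\circ(A_k)=(A_k)$.

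First I would fix an arbitrary index $p$ and split into the three possible values of $A_p$, which by the hypothesis are $0$, $r+1$, and $-(r+1)$. The case $A_p=0$ is immediate from (2.2) since the empty sum gives $A'_{p+1}=0=A_p$. For $A_p=r+1>0$, I apply equation (2.3) to obtain
\begin{equation*}
    A'_{p+1}=(r+1)+\sum_{i=0}^{r}A_{p-i}=(r+1)+A_p+\sum_{i=1}^{r}A_{p-i},
\end{equation*}
and now the key observation is that $\sum_{i=1}^{r}A_{p-i}$ is a sum of $r$ consecutive terms of a period-$r$ sequence, hence equal to the full unitary sum $\sum_{i=1}^{r}\dot{A}_i=-(r+1)$. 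Substituting gives $A'_{p+1}=(r+1)+(r+1)-(r+1)=r+1=A_p$. The case $A_p=-(r+1)<0$ is symmetric: the sum in (2.3) now runs forward, $\sum_{i=1}^{r}A_{p+i}$ is again a full period and equals $-(r+1)$, and the same arithmetic produces $A'_{p+1}=A_p$.

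Having established $\textsc{O}\circ(A_k)=\textsc{L}\circ(A_k)$, I still need $\textsc{O}^r\circ(A_k)=(A_k)$, so I would note that $\textsc{L}\circ(A_k)$ is just a shift of $(A_k)$, hence still has period $r$ and still satisfies both hypotheses of the theorem ($|\cdot|\in\{0,r+1\}$ and unitary sum $-(r+1)$ are shift-invariant). Therefore the identity $\textsc{O}\circ(\cdot)=\textsc{L}\circ(\cdot)$ propagates: by induction $\textsc{O}^k\circ(A_k)=\textsc{L}^k\circ(A_k)$ for every $k\geq 1$, and at $k=r$ periodicity gives $\textsc{L}^r\circ(A_k)=(A_k)$.

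The only non-routine point is the bookkeeping that a block of exactly $|A_p|-1=r$ consecutive terms appears on the right-hand side of (2.3), which is what makes the full unitary sum collapse cleanly. I do not anticipate a genuine obstacle: everything reduces to the deliberate matching between $|A_p|=r+1$ (giving $r$ summands beyond the leading $A_p$) and the period $r$ of $(A_k)$, together with the tuned value of the unitary sum.
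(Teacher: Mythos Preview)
Your proposal is correct and follows essentially the same approach as the paper: the paper's proof verifies equation (5.4) with $n=1$, which is precisely the statement that $\textsc{O}$ acts as the shift $\textsc{L}$ on $(A_k)$, using the same collapse of a block of $r$ consecutive terms to the unitary sum $-(r+1)$. Your version is more explicit (the three-case split, and the induction showing the shifted sequence again satisfies the hypotheses so that $\textsc{O}^k=\textsc{L}^k$), whereas the paper leaves those details to the framework set up in (5.1)--(5.4).
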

\begin{proof}
Equation (5.4) is satisfied under the conditions stated above:
\begin{equation*}
\begin{split}
    |A_p|&=|r+1|=r+1=-1*(-r-1)+0\\
    &=-1*\sum^r_{i=1}{\dot{A}_i}-\sum^{r \bmod{r}}_{i=1}A_{p-i\sign{A_p}}\\
\end{split}
\end{equation*}
\end{proof}
\begin{corollary}
For every $m>0$ there exists an eigen-sequence of $\textsc{O}^{2m+1}$ such that $m$ elements have value $2m+2$ and $m+1$ elements have value $-2m-2$.
\end{corollary}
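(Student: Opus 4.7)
The plan is to apply Theorem 5.1 directly with $r = 2m+1$. I would first exhibit a concrete candidate: take the unitary sequence $\dot{A}_1, \dot{A}_2, \ldots, \dot{A}_{2m+1}$ by placing the value $+(2m+2)$ in any $m$ of the $2m+1$ positions and $-(2m+2)$ in the remaining $m+1$ positions, then extend periodically to a bi-infinite sequence $(A_k)$ of period $r = 2m+1$. The specific arrangement within the period turns out to be irrelevant, because the hypotheses of Theorem 5.1 are phrased entirely in terms of magnitudes and the sum over one full period.

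Next I would verify the two hypotheses of Theorem 5.1 for this $(A_k)$. First, every nonzero entry has absolute value $2m+2 = r+1$, which is immediate since every entry is $\pm(2m+2)$. Second, the sum over one period is
\begin{equation*}
    \sum_{i=1}^{2m+1}\dot{A}_i = m(2m+2) + (m+1)\bigl(-(2m+2)\bigr) = (2m+2)(m - (m+1)) = -(r+1),
\end{equation*}
matching the remaining hypothesis on the nose. Theorem 5.1 then yields $\textsc{O}^{2m+1}\circ(A_k) = (A_k)$, which is precisely the claimed existence.

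I do not expect a serious obstacle, since the corollary is essentially a direct numerical instance of Theorem 5.1. The one small point worth flagging is that the counts $m$ and $m+1$ add up to $2m+1$, so every slot in the unitary sequence is filled by a nonzero value; this ensures that the implication ``$|A_p|\neq 0 \implies |A_p| = r+1$'' places a nontrivial constraint that is nevertheless automatically satisfied by our construction. Because any permutation of the chosen signs within one period produces another valid example, Theorem 5.1 actually supplies an entire family of such eigen-sequences of $\textsc{O}^{2m+1}$.
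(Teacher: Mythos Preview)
Your proposal is correct and matches the paper's intended approach: the corollary is stated without proof precisely because it is an immediate numerical specialization of Theorem~5.1 with $r=2m+1$, and you carry out exactly that verification. Your observation that the arrangement of signs within a period is irrelevant (since Theorem~5.1 depends only on magnitudes and the period sum) is accurate and slightly more explicit than the paper itself.
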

%A^r+A^p
%Case when A_n+2=A_n+1+A_n+C

%    Bibliographies can be prepared with BibTeX using amsplain,
%    amsalpha, or (for "historical" overviews) natbib style.
%    Insert the bibliography data here.

\Addresses
\end{document}